\theoremstyle{plain}
\newtheorem{theorem}{Theorem}[section]
\newtheorem*{thma}{Theorem A}
\newtheorem*{thmb}{Theorem B}
\newtheorem*{thmc}{Theorem C}
\newtheorem{proposition}[theorem]{Proposition}
\newtheorem{corollary}[theorem]{Corollary}
\newtheorem{lemma}[theorem]{Lemma}
\newtheorem{conjecture}[theorem]{Conjecture}
\theoremstyle{definition}
\theoremstyle{remark}
\newtheorem{remark}[theorem]{Remark}
\DeclareMathOperator{\ord}{ord}
\DeclareMathOperator{\weight}{weight}
\DeclareMathOperator{\card}{card}
\DeclareMathOperator{\End}{End}
\def\R{\mathbb R}
\def\C{\mathbb C}
\def\P{\mathbb P}
\def\O{\mathcal O}
\def\Z{\mathbb Z}
\def\Q{\mathbb Q}
\def\F{\mathcal F}
\def\G{\mathcal G}
\def\H{\mathfrak H}
\begin{document}
\bibliographystyle{alpha}

\title{Curves in Hilbert modular varieties} 

\author{Erwan Rousseau \and Fr\'ed\'eric Touzet }
\address{Erwan Rousseau \\ Institut Universitaire de France \& Aix Marseille Universit\'e, CNRS, Centrale Marseille, I2M, UMR 7373, 13453 Marseille, France.}
\email{erwan.rousseau@univ-amu.fr}
\address{Fr\'ed\'eric Touzet \\ IRMAR, Campus de Beaulieu 35042 Rennes Cedex, France.}
\email{frederic.touzet@univ-rennes1.fr}

\keywords{}
\subjclass[2010]{Primary: 32Q45, 37F75; Secondary: 11F41.}

\begin{abstract}
We prove a boundedness-theorem for families of abelian varieties with real multiplication.
More generally, we study curves in Hilbert modular varieties from the point of view of the Green-Griffiths-Lang conjecture claiming that entire curves in complex projective varieties of general type should be contained in a proper subvariety. Using holomorphic foliations theory, we establish a Second Main Theorem following Nevanlinna theory. Finally, with a metric approach, we establish the strong Green-Griffiths-Lang conjecture for Hilbert modular varieties up to finitely many possible exceptions.
\end{abstract}

\maketitle

\section{Introduction}
Let us start by recalling the following boundedness-theorem of Faltings \cite{Fa83} on families of abelian varieties. $C$ is a smooth complex algebraic curve, $S \subset C$ a finite set of points and we consider families $p: X \to C \setminus S$ of principally polarized abelian varieties of dimension $g$ with a level $n$ structure ($n\geq 3)$. This gives a map to the corresponding moduli space $\phi: C \to \overline{\mathcal{A}}:=\overline{\H_g / \Gamma_n}$ where $\H_g$ is the Siegel upper half space and $\Gamma_n$ is the congruence subgroup of level $n$ of $Sp(2g, \Z).$ Moreover $\phi^{-1}(D) \subset S$ where $D:= \overline{\mathcal{A}} \setminus \mathcal{A}.$

Let $L:=K_\mathcal{A}+D$ be the logarithmic canonical bundle (which is big and nef). Then it is proved in \cite{Fa83} that there is a constant $c$ such that 
$$\deg \phi^*(L) \leq c$$
for any such $\phi$. In addition, it is shown that one can take $$c=g(g+1)(3g(C)+s+1)$$ where $g(C)$ is the genus of $C$ and $s:= \card S.$ Later Kim \cite{Kim98} has obtained $$c=\frac{g(g+1)}{2}(2g(C)-2+s).$$

This result can be seen as an illustration of general conjectures of Lang and Vojta which should imply the following statement.
\begin{conjecture}[Lang, Vojta]
Let $\overline{X}$ be a complex projective manifold, $D=\overline{X}\setminus X$ a normal crossing divisor, $C$ a smooth projective curve and $S \subset C$ a finite subset. If the pair $(X,D)$ is of log-general type (i.e $K_{\overline{X}}+D$ is big) then there exists a constant $c$ and a proper subvariety $Y \subsetneq \overline{X}$ such that
$$\deg \phi(C) \leq c$$
for all morphisms $\phi: C \to \overline{X}$ such that $\phi^{-1}(D) \subset S$ and $\phi(C) \not \subset Y$.
\end{conjecture}

The first result of this paper provides another illustration of this conjecture. We study the case of families $p: X \to C \setminus S$ of principally polarized abelian varieties equipped with real multiplication i.e. there is an injection $\mathcal{O}_K \hookrightarrow \End A,$ of the ring of integers of a totally real number field of degree $n$. As above, this gives a map $\phi: C \to Y_K$ to the corresponding moduli space.

Let us recall the construction of this moduli space. The Hilbert modular group is defined as $\Gamma_K:=SL_2(\mathcal{O})$ where $\mathcal{O}:=\mathcal{O}_K$ denotes the ring of algebraic integers of $K$. $\Gamma_K$ acts on the product of $n$ upper half-planes $\H^n.$ There is a natural compactification of the quotient $Y_K:=\overline{\H^n/\Gamma_K}$ adding finitely many cusps. There is a projective resolution, called a \emph{Hilbert modular variety}, $\pi: X\rightarrow {\overline{{\H}^n / \Gamma_K}}$ with $E$ the exceptional divisor.

In this setting, we prove the analogue of Faltings' theorem.

\begin{thma}\label{FA}
Let $X$ be a Hilbert modular variety with $E \subset X$ the exceptional divisor, $C$ a smooth projective curve and $S \subset C$ a finite subset. Then
$$\deg_C(f^*(K_X+E)) \leq n(2g(C)-2+s),$$
for all morphisms $f: C \to X$ such that $f^{-1}(E) \subset S$.
\end{thma}

In the sequel, we turn to the analytic side of these questions.
The geometry of Hilbert modular varieties has attracted a lot of attention, especially in dimension $n=2$ \cite{Hirz}. In higher dimension, the Kodaira dimension of Hilbert modular varieties was studied in \cite{Tsu85} showing that for almost all $K$, $X_K$ is of general type. This property suggests that Hilbert modular varieties should in general satisfy the Green-Griffiths-Lang principle.

\begin{conjecture}(Strong Green-Griffiths-Lang conjecture)
Let $X$ be a complex projective variety of general type. Then there exists a proper algebraic subvariety $Z\subsetneq X$ such that every (non-constant) entire curve $f:\C \to X$ satisfies $f(\C) \subset Z$.
\end{conjecture}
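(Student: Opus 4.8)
The plan is to attack this through the theory of jet differentials, following the Green--Griffiths--Demailly program. The cornerstone is the fundamental vanishing theorem: if $f:\C\to X$ is a non-constant entire curve and $P$ is a global jet differential of order $k$ and weighted degree $m$ with values in the inverse $A^{-1}$ of an ample line bundle $A$, then $P(f',\dots,f^{(k)})\equiv 0$. Consequently the image of \emph{every} entire curve lies in the common zero locus of all such sections. The problem then splits into two tasks: first, produce a sufficient supply of global jet differentials twisted by a negative power of $A$; second, show that the common zero locus of these sections projects to a proper algebraic subvariety $Z\subsetneq X$.

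For the first task one works on the Demailly--Semple tower $X_k\to\dots\to X_1\to X$ of directed jet bundles and estimates $h^0(X_k,\O_{X_k}(m)\otimes\pi_k^*A^{-1})$ by holomorphic Morse inequalities. Since $X$ is of general type, $K_X$ is big, and the hope is that this positivity propagates up the tower so as to force these spaces to be nonzero once $k$ is large and $m\gg 0$. For the second task one differentiates a given jet differential along meromorphic (``slanted'') vector fields on $X_k$, in the spirit of Siu and Merker: each such differentiation yields a new jet differential, and a large enough family of vector fields cuts down the zero locus until its projection to $X$ becomes proper, which then produces the desired $Z$.

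The hard part is that for an \emph{arbitrary} variety of general type neither task is known to go through, and this is exactly why the conjecture remains open. The Morse estimate controls only an alternating sum of cohomology, so nonvanishing of $h^0$ demands a bound on the higher cohomology $h^q$ ($q\ge 1$) of the jet bundles; no such bound follows from the bigness of $K_X$ alone, since the negative Chern contributions entering the estimate are not controlled. Likewise the slanted-vector-field method requires $X_k$ to be suitably generated by meromorphic vector fields, a property established only in special geometric settings (generic complete intersections of high degree, or varieties with enough symmetry). It is precisely the failure to carry out these two steps uniformly that obstructs a proof in full generality; the present paper circumvents this by exploiting the foliated and metric structure peculiar to Hilbert modular varieties rather than treating $X$ as a bare variety of general type, which is why it reaches the conclusion only for $X_K$ and only up to finitely many exceptions.
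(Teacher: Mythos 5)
The item you were asked to prove is a \emph{conjecture}: the paper states the strong Green--Griffiths--Lang conjecture purely as background and motivation, and proves it nowhere. Its actual theorems establish only special cases by other means (a Second Main Theorem, a function-field Vojta inequality, and the strong conjecture for Hilbert modular varieties of dimension $n\geq 2$ up to finitely many possible exceptions). So there is no proof in the paper to compare yours against, and your write-up is correct precisely because it does not pretend to be a proof: the two steps you isolate --- producing global jet differentials with values in $A^{-1}$, then slicing their common zero locus with meromorphic vector fields until the projection to $X$ is proper --- are exactly the known obstructions, and neither is available for an arbitrary variety of general type.

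One point is worth adding because it bears directly on this paper: the jet-differential route you sketch is not merely unavailable in general, it is provably the \emph{wrong} tool for the varieties studied here. The introduction cites \cite{DiRou}, where it is shown that Hilbert modular varieties furnish counter-examples to the jet-differentials strategy of Bloch, Green--Griffiths, Demailly and Siu; the culprit is precisely the canonical holomorphic foliations (of negative Kodaira dimension) that these varieties carry. The paper turns this liability into an asset: it uses the foliations themselves, via a tangency formula ($N_{{\F}_1}^*\otimes\dots\otimes N_{{\F}_n}^* = K_X\otimes\O(-(n-1)E)$ on a suitable resolution) combined with McQuillan's tautological inequality, to get the Second Main Theorem and the Vojta-type bound; and it uses pseudo-metrics built from Hilbert modular forms and the Bergmann metric, via an Ahlfors--Schwarz argument, to get the strong conjecture with finitely many exceptions. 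Your closing sentence correctly identifies this contrast, and your overall assessment --- that the conjecture remains open and your outline cannot be completed --- is the right one.
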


The geometry of algebraic curves from this point of view started in \cite{Frei80} where it is proved that, for principal congruence subgroups of sufficiently high level, rational and elliptic curves all lie in the boundary divisor of the compactification.

This statement was later generalized by \cite{Na} who deals with transcendental curves and quotients of bounded symmetric domains by arithmetic lattices.

Hilbert modular varieties have also been studied by people interested in holomorphic foliations theory. Indeed, they carry canonical holomorphic foliations (induced by the natural foliations living on the product of upper half-planes)  with interesting properties such as negative Kodaira dimension. These properties have been recently used in \cite{DiRou} to prove that Hilbert modular varieties provide counter-examples to the so-called ``jet differentials'' strategy developed (by Bloch, Green-Griffiths, Demailly, Siu...) to attack the Green-Griffiths-Lang conjecture.

One of the goal of this paper is to use alternative methods which do not need to pass to sufficiently high level and in particular, get rid of the torsion. Indeed, such phenomenon has important consequences: considering the diagonal embedding $\H \to \H^n$ and taking the quotient $\overline{\H/SL_2(\Z)} \to \overline{{\H}^n / \Gamma_K}$ show that there are always rational curves not contained in the boundary divisor.

Here, we use foliation theory to obtain a second main theorem (in the language of Nevanlinna theory) for entire curves into Hilbert modular varieties, generalizing previous works \cite{Tib} which dealt with the surface case.

\begin{thmb}
There is a  projective resolution $\pi: X\rightarrow {\overline{{\H}^n / \Gamma_K}}$ with $E$ the exceptional divisor, $K_X$ the canonical line bundle of $X$ such that if $f: \C \to X$ is a non-constant entire curve not contained in $E$ then
$$T_f(r, K_X)+T_f(r, E) \leq nN_1(r, f^*E)+S_f(r) \|,$$
where $S_f(r)=O(\log^+ T_f(r))+o(\log r),$ and $\|$ means that the estimate holds outside some exceptional set of finite measure.
\end{thmb}

This has some consequences on the geometry of curves ramifying on the boundary divisor.

\begin{corollary}
Let $X$ be a Hilbert modular variety of general type as above. Let $f: \C \to X$ be a non-constant entire curve which ramifies  over $E$ with order at least $n$, i.e. $f^*E\geq n \operatorname{supp} f^*E$. Then $f(\C)$ is contained in $E$.
\end{corollary}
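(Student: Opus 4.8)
The plan is to derive the corollary directly from the Second Main Theorem (Theorem 1.1) by a Nevanlinna-theoretic argument. The key is the truncated counting function $N_1(r, f^*E)$, which counts the points of $f^*E$ with multiplicity truncated at $1$, i.e. it counts each point of the support with weight $1$ regardless of its actual multiplicity. The hypothesis $f^*E \geq n\operatorname{supp} f^*E$ says precisely that every point where $f$ meets $E$ does so with order at least $n$, which is what lets us bound $N_1$ by a fraction of the full counting function.

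First I would spell out the comparison between the truncated and untruncated counting functions. By definition, $N(r, f^*E) = \sum_{|z|<r} \operatorname{ord}_z(f^*E) \log\frac{r}{|z|} + (\text{term at }0)$, while $N_1(r, f^*E)$ uses $\min(\operatorname{ord}_z(f^*E), 1)$ in place of $\operatorname{ord}_z(f^*E)$. The ramification hypothesis $\operatorname{ord}_z(f^*E) \geq n$ at every intersection point gives $\min(\operatorname{ord}_z(f^*E),1) = 1 \leq \tfrac{1}{n}\operatorname{ord}_z(f^*E)$, and summing yields
\[
N_1(r, f^*E) \leq \frac{1}{n} N(r, f^*E).
\]
Next I would relate $N(r, f^*E)$ to the order function $T_f(r, E)$. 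Up to the bounded first main theorem defect, $N(r, f^*E) \leq T_f(r, E) + O(1)$, since $N(r,f^*E) + m(r,f^*E) = T_f(r,E) + O(1)$ and the proximity function $m$ is nonnegative. Combining these two steps gives $nN_1(r, f^*E) \leq T_f(r, E) + O(1)$.

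Now I would feed this into Theorem 1.1. The Second Main Theorem reads $T_f(r,K_X) + T_f(r,E) \leq nN_1(r,f^*E) + S_f(r)$, and substituting the bound just obtained yields
\[
T_f(r,K_X) + T_f(r,E) \leq T_f(r,E) + S_f(r) + O(1),
\]
so that $T_f(r,K_X) \leq S_f(r) + O(1) = O(\log^+ T_f(r)) + o(\log r)$ outside a set of finite measure. Since $X$ is of general type, $K_X$ is big, and I would invoke the standard fact that for a big line bundle the order function $T_f(r,K_X)$ grows at least like a positive multiple of the order function of an ample bundle for any entire curve not contained in the augmented base locus; in particular, if $f(\C)$ were not contained in the relevant exceptional/base locus, $T_f(r,K_X)$ would dominate $\log r$ and the small-term estimate would force $f$ to be constant, a contradiction.

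The main obstacle, and the point requiring the most care, is the last step: translating the slow growth of $T_f(r,K_X)$ into the geometric conclusion that $f(\C) \subset E$. Bigness of $K_X$ only controls $T_f$ away from the augmented base locus of $K_X$, so one must argue that the locus forcing degeneracy is contained in (or can be absorbed into) $E$ together with the already-excluded exceptional divisor; I would handle this by combining the above growth estimate with the tautological inequality and the fact that $f$ is assumed non-constant and not contained in $E$, using that the boundary and exceptional contributions are exactly accounted for by $T_f(r,E)$ on the left-hand side. I expect this base-locus bookkeeping, rather than the elementary counting-function inequality, to be where the real work lies.
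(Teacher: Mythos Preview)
Your argument is exactly the paper's: from the ramification hypothesis deduce $nN_1(r,f^*E)\leq N(r,f^*E)\leq T_f(r,E)+O(1)$, cancel $T_f(r,E)$ in the Second Main Theorem to get $T_f(r,K_X)\leq S_f(r)$, and contradict bigness of $K_X$. One remark: contrary to your expectation, the paper treats the final step as a one-line standard fact (``Since $K_X$ is supposed to be a big line bundle, this gives a contradiction'') and does not engage with augmented base loci at all---so the elementary counting-function inequality really is the whole content, not the base-locus bookkeeping.
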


In the third part, we develop a metric approach initiated in \cite{Rou13}. Using Hilbert modular forms and the Begmann metric we are able to construct pseudo-metrics which give control on the Kobayashi pseudo-distance. In particular, we prove 

\begin{thmc}
Let $n\geq 2$. Then, except finitely many possible exceptions, Hilbert modular varieties of dimension $n$ satisfy the strong Green-Griffiths-Lang conjecture.
\end{thmc}

\subsubsection*{Acknowledgments}

The first--named author would like to warmly thank Amir D\v{z}ambi\'c and Carlo Gasbarri for interesting discussions about the arithmetic aspects of this paper.

\section{A tangency formula}

\subsection{Tangential locus of foliations}

Let $X$ be a $m$-dimensional complex manifold equipped with $m$ codimension $1$ holomorphic (maybe singular) foliations ${\F}_1,...,{\F}_m$ in general position, which means that, given $m$ locally defining $1$-forms $\omega_1,..., \omega_m$ at $p\in X$ of ${\F}_1,..{\F}_m$, one has 

    $$\omega_1\wedge\omega_2\wedge.....\wedge\omega_m\not\equiv 0$$
with $\mbox{codim}\ (\mbox{Sing}\ \omega_i)\geq 2$.

Let $\eta$ be a local non-degenerate holomorphic volume form and $f$ a holomorphic function such that  $\omega_1\wedge\omega_2\wedge.....\wedge\omega_m=f\eta,$ and $H_p$ the (local) divisor defined by the (possibly non reduced) equation $f=0$.

Note that $H_p$ (as a germ) does not depend on the particular choice of the $\omega_i$'s and $\eta$ and this allows us to define the tangential divisor $H=\mbox{tan}({\F}_1,..., {\F}_m)$: this is nothing but the integral effective divisor on $X$ whose germ at $p\in X$ is $H_p$.

Here is a list of basic properties satisfied by this divisor:

\begin{enumerate}

 \item $N_{{\F}_1}^*\otimes...\otimes N_{{\F}_m}^*=K_X\otimes{\mathcal O}(-H)$.

\item $H$ contains the union of singular loci $\bigcup_i\mbox{Sing}\ {\F}_i$ in its support.

\item If $K$ is an invariant hypersurface for each foliation ${\F}_i$ , then $(m-1)K\leq H$.
\end{enumerate}

\subsection{Some examples.}

\subsubsection{Resolution of cyclic singularities}

This problem has been adressed by Fujiki (\cite{fu}). We will consider a particular case, namely the type of cyclic singularity occuring on Hilbert modular varieties.

Let $G$ a finite cyclic group with a fixed generator acting on a complex affine $m$-space ${\C}^ m={\C}^m(z_1,...,z_m)$ by the formula:

$$g(z_1,..,z_m)=(e_1 z_1,...,e_m z_m)$$

where, for each $i$, $e_i$ is a $n^{th}$ root of the unity, $n>1$. Then the quotient $X={\C}^m/G$ has the structure of a normal affine algebraic variety whose singular locus is reduced to the single point $p=\pi (0)$ where $\pi:{\C}^m\rightarrow X$ denotes the canonical projection.

Following \cite{fu} (theorem 1 p.303) there exists a resolution of the singularity of $X$, that is a pair $(\tilde X, f)$ consisting of a smooth variety $\tilde X$, a proper birational morphism $f:\tilde X \rightarrow X$ isomorphic outside $f^{-1} (p)$ with the additional following properties 

\begin{enumerate}
 \item $E=f^{-1} (p)$ is a simple normal crossing hypersurface, each component of which being rational.

\item There exists an affine open covering $\mathcal U=(U_1,...,U_l)$ of $\tilde X$ with $U_i\simeq{\C}^m$.

\item $f^{-1}(p)\cap U_i$ is defined by $u_{k_1}^i...u_{k_t}^i=0$ in $U_i\simeq {\C}^m(u^i)$ for some $k_1,..,k_t$ (depending on $i$) if it is non empty.

\item For each $i$ the multivaluate map $f\circ \pi:{\C}^m (u^i)\rightarrow {\C}^m(z)$ takes the following form:
 \begin{equation}\label{transform1}
z_k=\prod_{l=1}^m {u_l^i}^{b_{lk}}
\end{equation}

where $b_{lk}$ are nonnegative rational numbers (depending on $i$).
\end{enumerate}

${\C}^m={\C}^m (z)$ comes equipped with $m$ tranverse codimension $1$ foliations ${\mathcal G}_i$ defined respectively by $dz_i=0, i=1,...,m$. Each of them  gives  rise on the quotient $X$ to a foliation ${\mathcal H}_i$. 

Let us set ${\F}_i = f^*{\mathcal H}_i$.
 Remark that ${z_i}^n$ descends to $X$ as an holomorphic first integral $F_i$ of ${\mathcal H}_i$. Consequently $G_i=F_i\circ f$ is a holomorphic first integral for ${\F}_i$ vanishing on $E$, whence the following lemma.
\begin{lemma}
 $E$ is invariant by each ${\F}_i$.
\end{lemma}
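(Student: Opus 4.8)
The plan is to exploit the explicit holomorphic first integral $G_i=F_i\circ f$ of $\F_i$ produced just above. A foliation admitting a holomorphic first integral has, as its leaves, the connected components of the level hypersurfaces of that integral; consequently any hypersurface that is a union of components of a single level set is automatically invariant. Since $G_i$ vanishes along $E$, it suffices to show that every irreducible component of $E$ is a component of the level set $\{G_i=0\}$.

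Invariance being a local and componentwise condition, I would work in each chart $U_i\simeq{\C}^m(u^i)$ from property (2), where by property (3) the divisor $E$ is the union of the coordinate hyperplanes $\{u_{k_1}^i=0\},\dots,\{u_{k_t}^i=0\}$. Using the monomial form \eqref{transform1} of $f\circ\pi$, the descent $F_i$ of $z_i^n$ pulls back to the monomial $G_i=\prod_{l=1}^m (u_l^i)^{n b_{li}}$ in these coordinates.

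The one substantive step is to check that each component $\{u_{k_s}^i=0\}$ of $E$ occurs in $G_i$ with positive exponent, i.e. $b_{k_s,i}>0$. I would argue as follows: the generic point of $\{u_{k_s}^i=0\}$ lies in $E=f^{-1}(p)$, hence is sent by $f$ to the single point $p$, where all coordinates $z_k$ vanish; as the remaining $u_l^i$ ($l\neq k_s$) are generically nonzero there, the relation $z_k=\prod_l (u_l^i)^{b_{lk}}$ forces $b_{k_s,k}>0$ for every $k$, and in particular $b_{k_s,i}>0$. Thus each component of $E$ does sit inside $\{G_i=0\}$ with positive multiplicity.

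It then remains to translate this into the invariance condition. Writing $G_i=\prod_l (u_l^i)^{a_l}$ with $a_l=n b_{li}$ and $a_{k_s}>0$, I would divide $dG_i$ by the greatest common divisor of its coefficients to obtain a defining $1$-form $\omega_i$ of $\F_i$ whose singular set has codimension $\geq 2$, and then verify directly that $du_{k_s}^i\wedge\omega_i$ is divisible by $u_{k_s}^i$, which is exactly the condition that $\{u_{k_s}^i=0\}$ be $\F_i$-invariant. Ranging over $s$ gives invariance of $E\cap U_i$, and over $i$ the invariance of $E$. The only genuinely nontrivial point in the whole argument is the positivity $b_{k_s,i}>0$: it is precisely what prevents $E$ from being transverse to $\F_i$ and instead places it inside a union of leaves.
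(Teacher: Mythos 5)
Your proof is correct and follows the same route as the paper: the paper simply observes that $G_i=F_i\circ f$ is a holomorphic first integral of $\F_i$ vanishing on $E$ (since $f(E)=p$ and $F_i(p)=0$) and concludes invariance from that, which is exactly your argument with the details filled in. The extra step you isolate --- the positivity of the exponents $b_{k_s,i}$ in the monomial expression of $G_i$, deduced from $f(E)=\{p\}$ --- is a correct and slightly more explicit way of verifying the same vanishing that the paper asserts in one line.
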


\begin{lemma}\label {tangencyorder}
 The tangential divisor $$H=\mbox{tan}({\F}_1,..., {\F}_m)$$ is equal to $(m-1)E$.
\end{lemma}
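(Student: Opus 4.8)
The plan is to establish the two inequalities $(m-1)E\le H$ and $H\le (m-1)E$ separately: the first is essentially formal, while the second forces an explicit local computation in the resolution charts and is where all the work lies. For the lower bound I would simply invoke the preceding lemma — each $\F_i$ leaves $E$ invariant — together with property (3) of the tangential divisor, which gives $(m-1)E\le H$ at once. It remains to bound $H$ from above and to determine its support and multiplicities precisely.

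For the upper bound I would work chart by chart on an element $U_i\simeq\C^m(u)$ of the covering, suppressing the index $i$. Since $z_k^n$ descends to $X$ as a first integral of $\mathcal H_k$, the pulled-back foliation $\F_k=f^*\mathcal H_k$ admits $G_k:=z_k^n\circ f=\prod_l u_l^{a_{lk}}$ as a first integral, where $a_{lk}=n\,b_{lk}\in\Z_{\ge 0}$ by \eqref{transform1}. Setting $S_k=\{l:a_{lk}>0\}$ and taking the logarithmic derivative, the reduced $1$-form defining $\F_k$ is
\[
\omega_k=\Big(\prod_{p\in S_k}u_p\Big)\,\alpha_k,\qquad \alpha_k=\sum_l a_{lk}\,\frac{du_l}{u_l},
\]
and one checks directly that $\mathrm{codim}\,\mathrm{Sing}\,\omega_k\ge 2$, so this is a legitimate defining form.

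The core computation is then the wedge product. From $\alpha_1\wedge\cdots\wedge\alpha_m=\det(a_{lk})\,\frac{du_1}{u_1}\wedge\cdots\wedge\frac{du_m}{u_m}$ I obtain
\[
\omega_1\wedge\cdots\wedge\omega_m=\det(a_{lk})\,\Big(\prod_p u_p^{\,c_p-1}\Big)\,du_1\wedge\cdots\wedge du_m,\qquad c_p:=\#\{k:a_{pk}>0\},
\]
where $\det(a_{lk})\ne 0$ because $f\circ\pi$ is dominant (equivalently the $\F_k$ are mutually transverse where $f$ is an isomorphism). Hence $H\cap U_i=\sum_p(c_p-1)\{u_p=0\}$, and the theorem reduces to the combinatorial claim that $c_p=m$ when $\{u_p=0\}$ is a component of $E$ and $c_p=1$ otherwise.

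The main obstacle is exactly this identification of the integers $c_p$. One half is the set-theoretic description $E=f^{-1}(p)=\{G_1=\cdots=G_m=0\}$, whence $\{u_p=0\}\subset E$ iff $a_{pk}>0$ for every $k$, i.e. $c_p=m$. The delicate half is $c_p=1$ off $E$, which I would prove by contradiction: $c_p\ge 1$ always, since no row of the invertible matrix $(a_{lk})$ can vanish; and if $c_p\ge 2$ then two distinct first integrals $G_j,G_k$ vanish on $\{u_p=0\}$, forcing $f(\{u_p=0\})$ into $V(F_j)\cap V(F_k)$, which has codimension $2$ in $X$. But a hyperplane $\{u_p=0\}$ not contained in $E$ is not contracted by the birational morphism $f$, so its image has codimension $1$ — a contradiction. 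This yields $c_p=1$ away from $E$, hence $H\cap U_i=(m-1)(E\cap U_i)$; gluing over the charts (and using that $H$ is supported on $E$, the $\F_k$ being transverse off $E$) gives $H=(m-1)E$.
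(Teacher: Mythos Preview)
Your proof is correct and follows the same overall strategy as the paper: the lower bound $(m-1)E\le H$ comes from invariance of $E$ together with property~(3), and the upper bound is obtained by an explicit computation in the toric charts $U_i$ using the monomial change of variables $z_k=\prod_l u_l^{b_{lk}}$ from~\eqref{transform1}.

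The paper packages the computation slightly differently. Rather than clearing denominators to produce holomorphic defining forms $\omega_k=(\prod_{p\in S_k}u_p)\,\alpha_k$ and then analysing the integers $c_p$, it works directly with the logarithmic $m$-form
\[
\Omega=\frac{dG_1}{G_1}\wedge\cdots\wedge\frac{dG_m}{G_m}
=\lambda_i\,\frac{du_1}{u_1}\wedge\cdots\wedge\frac{du_m}{u_m},\qquad \lambda_i\ne 0,
\]
and invokes the criterion that equality in $H\ge(m-1)E$ holds exactly when every component of $E$ lies in the polar locus of $\Omega$. Since $\Omega$ visibly has a simple pole along each $\{u_l=0\}$, this finishes the argument in one line. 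Your route is more explicit and arguably more self-contained (the paper states but does not justify that criterion); the extra work you do --- showing $c_p=1$ for coordinate hyperplanes not contained in $E$, via the birationality of $f$ and the codimension of $V(F_j)\cap V(F_k)$ --- is precisely what replaces the paper's appeal to the polar-locus criterion, and simultaneously confirms that $H$ is supported on $E$.
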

\begin{proof}
 As $E$ is ${\F}_i$ invariant, one can assert that $H\geq (m-1)E$ and that equality holds if and only $E$ is contained in the polar locus of $$\Omega=\frac{dG_1}{G_1}\wedge....\wedge\frac{dG_m}{G_m}.$$ On the other hand, using the expression (\ref{transform1}), one can deduce that $$\Omega_{|U_i}=\lambda_i \frac{d u_1^i}{u_1^i}\wedge....\wedge \frac{d u_m^i}{u_m^i}$$ where $\lambda_i$ is a positive rational number, thus providing the desired equality. 

\end{proof}

\begin{remark}
One can recover  lemma \ref{tangencyorder}  using  the language of equivariant toric desingularization see \cite{mum} (and even more generally using Hironaka's equivariant resolution of singularities). It 's likely that this is more or less equivalent to Fujiki's construction while having the advantage of being explicitly {\it projective} (this is not totally clear concerning Fujiki's paper).

Indeed, denote also by $f:\tilde X\rightarrow X$ an equivariant toric resolution of $X={\C}^m/G$  (whose underlying torus is ${({\C}^*)} ^m/G$) obtained by blowing-up at $p$ and let $E$ be the exceptional divisor. 

As previously, we set ${\F}_i = f^*{\mathcal H}_i$, $H= \mbox{tan}({\F}_1,..., {\F}_m) $,  $G_i=F_i\circ f$. One more time, it is sufficient to prove that the polar locus of $\Omega=\frac{dG_1}{G_1}\wedge....\wedge\frac{dG_m}{G_m}$ contains each branch of $E$. This can be seen as follows: consider the standard torus $T=
{(\C^*)}^m$ whose action on itself is infinitesimally generated by the $m$ vector fields $Z_1=z_1\frac{\partial} {\partial z_1},...,Z_m= z_m\frac{\partial} {\partial z_m}$. Remark that the action of the torus ${({\C}^*)} ^m/G$ on $X$ is generated by $\pi_* Z_i$ $i=1,...,m$. Set $\xi= \pi_* (\frac{d z_1}{z_1}\wedge....\wedge \frac{d z_m}{z_m})$ and note that the logarithmic form $\frac{dF_i}{F_i}$ is obtained from $\xi$ by contraction with the (holomorphic) $m-1$ vector field $Z_2\wedge...\wedge Z_m$. By equivariance and obvious functoriality properties, the logarithmic form $\frac{dG_1}{G_1}$, whose polar locus contains $E$, can be constructed contracting $\Omega$ by an {\it holomorphic} $m-1$ vector field. This implies that the polar locus of $\Omega$ contains $E$.

\end{remark}

\subsubsection{Resolution of  cusps of ${\H}^m / \Gamma$.}\label{resolutionofcusps}

Let ${\H}^m$ be the product of $m$ copies of the Poincar\'e's upper half plane $\H$. Let $\Gamma$ be an irreducible lattice of the connected component of $G=\mbox{Aut}({\H^m})$ ($\simeq{ \mbox{Aut}(\H)}^m$). Following Shimizu \cite{shi}, there are finitely many  cusps  (wich give rise to additional points in the Baily-Borel compactification   ${\overline{{\H}^m / \Gamma}}^{BB}$ ) and the stabilizer to each of them can be represented (up to congugation in $G$) by 

$$\Gamma (N,V)=N\rtimes V=\Big\{\left(\begin{array}{cc}v & n \\0 & 1\end{array}\right)\in \mbox{GL}_2 (K)\arrowvert v\in V, n\in N \Big\}.$$

where

\begin{itemize}

\item $K$ is a totally real extension of degree $m$ over $\mathbb Q$.

\item $N\subset K$ is a free $\mathbb Z$ module of rank $m$.

\item $V$ is a finite index subgroup of $U_N^+$, the group of totally positive units that satisfy $uN=N$.
\end{itemize}

The group $\Gamma (N,V)$ acts on ${\H}^n$ by affine transformations

$$(z_j)\rightarrow v^{(j)}z_j + n^{(j)}$$

where $k\rightarrow k^{(j)}$ denotes the $j^{th}$ embedding of $K$ in $\mathbb R$.

We can find a neighborhood of the distinguished cusp $\{\infty\}$ such that the Baily-Borel compactification looks like ${\H}^n/\Gamma (N,V)\cup \{\infty\}$ (with a natural structure of normal analytic space singular at $\{\infty\}$ for a suitable pair $(N,V)$. We know focus on Ehler's procedure (without going into details) to resolve the cusp singularity by means of toroidal compactification (a particular case of the general machinery developped in \cite{mum2}). 

 Consider first the natural inclusion ${{\H}^m}/N\hookrightarrow T:={\mathbb C}^m/N$, the action of $N$ is defined as above. Using partial polyhedral cone decomposition (which depends on combinatorial data), one can then construct an equivariant embedding of the torus $T$ into a smooth analytic  manifold $X=\bigcup {(\mathbb C)}_\sigma^m/\sim$ obtained by glueing copies of ${\mathbb C}^m$ (as there are infinitely many of them, $X$ is not properly speaking a toric variety). Moreover $X$ contains a normal crossing hypersurface $F$ whose trace on each copy ${(\mathbb C)}_\sigma^m$ is the union of the coordinates hyperplanes and such that in the coordinates $u$ corresponding to some ${(\mathbb C)}_\sigma^m$, the map 
 
 $$\pi:X- F\rightarrow T$$
 
 that takes 
 
 $$\pi (u)= \left(\begin{array}{c}\mu_1^{(1)}\log{u_1}+...\mu_m^{(1)}\log {u_m} \\\vdots \\\mu_1^{(m)}\log{u_1}+...\mu_m^{(m)}\log {u_m}\end{array}\right)$$
 
 yields an isomorphism 
 
 $$X-F\simeq T$$
 
 where $\mu_1,...,\mu_m$ are totally positive elements of $N$.
 
 By lemmata 2 in \cite{eh}, the group $V$ acts freely and discontinuously on the open neighborhood of $F$, $\tilde X= \pi^{-1} ({{\H}^m}/N)\cup F $,  fixes globally $F$ and is cocompact with respect to $F$. It follows that $Y=\tilde X/V$ is a complex manifold and moreover one can prove that the isomorphism $\pi$ extends on $Y$ as an holomorphic map
 
      $$\hat{\pi}:Y\rightarrow  {\H}^m/\Gamma (N,V) \cup \{\infty\}$$
      
      that takes $F/V$ to $\{\infty\}$.
      
      Let ${\G}_1,..., {\G}_m$ the $m$ tautological codimension one foliation on  ${\H}^m/\Gamma (N,V) \cup \{\infty\}$      
  induced by $dz_1,...., dz_m$ on  ${\H}^m$. For $i=1,...,m$ denote by ${\mathcal F}_i$  the pull-back foliation ${\hat{\pi}}^*{\G}_i$.
  
  \begin{lemma}
  
  Let $E$ be the normal crossing compact hypersurface $F/V$.
    Then the tangential divisor
$$H=\mbox{tan}({\F}_1,..., {\F}_m)$$ is equal to $(m-1)E$.    
  \end{lemma}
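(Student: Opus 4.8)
The plan is to mimic the proof of Lemma~\ref{tangencyorder} step by step, the only change being that the monomial transformation~(\ref{transform1}) is now replaced by the logarithmic one furnished by the map $\pi$. First I would record that each boundary component is invariant. In a toroidal chart $(\mathbb C)^m_\sigma$ with coordinates $u=(u_1,\dots,u_m)$, the foliation $\F_i=\hat\pi^*\G_i$ is defined by the pullback of $dz_i$, namely the closed logarithmic $1$-form
$$\omega_i=\sum_{l=1}^m \mu_l^{(i)}\,\frac{du_l}{u_l},$$
since $z_i=\sum_l \mu_l^{(i)}\log u_l$ on $X-F$. A logarithmic foliation leaves each component of its polar divisor invariant, so every coordinate hyperplane $\{u_l=0\}$, hence $F$ and its image $E=F/V$, is $\F_i$-invariant. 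By property~(3) of the tangential divisor this already yields $H\geq (m-1)E$, and, exactly as in Lemma~\ref{tangencyorder}, equality will hold as soon as $E$ is contained in the polar locus of $\Omega=\omega_1\wedge\cdots\wedge\omega_m$.

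Next I would compute $\Omega$ in such a chart. As each $\omega_i$ is a constant-coefficient combination of the forms $\frac{du_l}{u_l}$, the wedge collapses to
$$\Omega=\det\big(\mu_l^{(i)}\big)_{1\le i,l\le m}\ \frac{du_1}{u_1}\wedge\cdots\wedge\frac{du_m}{u_m}.$$
The point is that the scalar $\det(\mu_l^{(i)})$ does not vanish: the $\mu_1,\dots,\mu_m$ are the generators of a smooth cone $\sigma$, hence a $\mathbb Z$-basis of $N$ and therefore a $\mathbb Q$-basis of the degree-$m$ field $K$, so their images under the total real embedding $K\hookrightarrow \R^m$ are $\R$-linearly independent. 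Thus $\Omega=\lambda_\sigma\,\frac{du_1}{u_1}\wedge\cdots\wedge\frac{du_m}{u_m}$ with $\lambda_\sigma\neq 0$, whose polar locus is exactly $\{u_1\cdots u_m=0\}=F\cap(\mathbb C)^m_\sigma$. Ranging over the charts shows $F$, hence $E$, lies in the polar locus of $\Omega$, giving $H=(m-1)E$. Equivalently, clearing denominators to the reduced holomorphic defining forms $\tilde\omega_i=(u_1\cdots u_m)\,\omega_i$ (whose singular loci have codimension $\ge 2$) one finds directly $\tilde\omega_1\wedge\cdots\wedge\tilde\omega_m=\lambda_\sigma\,(u_1\cdots u_m)^{m-1}\,du_1\wedge\cdots\wedge du_m$, so that $f=\lambda_\sigma (u_1\cdots u_m)^{m-1}$ and $H=(m-1)F$ locally.

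Finally I would check that the quotient by $V$ causes no trouble and isolate the only genuinely new difficulty. Since $V$ acts by $z_j\mapsto v^{(j)}z_j$ it preserves each $\G_j$ individually and multiplies $\Omega$ by $\prod_j v^{(j)}=N_{K/\Q}(v)=1$ (total positivity of the unit $v$), so $\Omega$ descends to $Y=\tilde X/V$ with unchanged polar locus; as $V$ acts freely near $F$ the computation is purely local and survives the quotient. The main obstacle, compared with the cyclic case, is exactly that here there is no single-valued holomorphic first integral to play the role of $G_i$: the natural integral $z_i$ of $\G_i$ is defined only modulo the $N$-translations and $V$-scalings, i.e. modulo branches of the logarithm. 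The argument must therefore be run entirely at the level of the closed logarithmic $1$-forms $\omega_i=dz_i$, which \emph{are} globally well defined on $X$, rather than through $\frac{dG_i}{G_i}$; once this reformulation is granted, the evaluation of $\Omega$ and the nonvanishing of the embedding determinant are routine.
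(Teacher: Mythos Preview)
Your argument is correct and follows essentially the same route as the paper: lift to $\tilde X$, express each foliation in a chart by the closed logarithmic form $\omega_i=\sum_l\mu_l^{(i)}\,\frac{du_l}{u_l}$, wedge them, and use that the matrix $(\mu_l^{(i)})$ is invertible to conclude $H=(m-1)F$ and hence $H=(m-1)E$ on the quotient. You supply more detail than the paper (the reason for the nonvanishing of $\det(\mu_l^{(i)})$, the descent under $V$, and the reformulation via $1$-forms rather than first integrals), but the core computation is the same.
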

  
  \begin{proof}
  Each foliation ${\F}_i$ lifts to a foliation ${\mathcal H}_i$ on $\tilde X$ via the covering map ${\tilde X}\rightarrow Y$.
  It is clearly sufficient to prove that  $\mbox{tan}({\mathcal H}_1,..., {\mathcal H}_m)$ is equal to $(m-1)F$. This is straighforward once we have observed that ${\mathcal H}_i$ is defined by the logarithmic form    
  
  $$\mu_1^{(i)}\frac{du_1}{u_1}+...\mu_m^{(i)}\frac{d u_n }{u_n}$$     
  
  and that the matrix $(\mu_i^{(j)})$ is invertible.
  
  \end{proof}
  
  According to \cite{mum2} IV.2, this toroidal compactification can be realized by some sequence of blowing-up for suitable combinatorial data. 
  
  \subsection {The tangency formula}. Putting together the different informations collected above, one can state the 
  
  \begin{theorem}\label{TAN}
   Let $\Gamma$ be an irreducible lattice of the connected component of $\mbox{Aut}({\H^m})$. Let $p_1,...,p_l$ the singular points arising from the Baily-Borel compactification  ${\overline{{\H}^m / \Gamma}}^{BB}$ (one also includes orbifold singularities). For $i=1,...,m$, let ${\mathcal G}_i$ be the codimension one foliation on ${\overline{{\H}^m / \Gamma}}^{BB}$ obtained by projecting the $i^{th}$ tautological foliation on ${\H}^m$ given by $dz_i=0$. Then there exists a  projective resolution $\pi: X\rightarrow {\overline{{\H}^m / \Gamma}}^{BB}$  of these singularities such that 
   
    $$N_{{\F}_1}^*\otimes...\otimes N_{{\F}_m}^*=K_X\otimes{\mathcal O}\big(-(m-1)E\big)$$  
    
where $E=\pi^{-1} (\{p_1,...,p_l\})$ is the exceptional divisor (a normal crossing hypersurface) and ${\F}_i= {{\pi}}^* {\mathcal G}_i$.

   \end{theorem}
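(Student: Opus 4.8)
The plan is to reduce the global statement to the local tangency computations already carried out, together with the basic property (1) of the tangential divisor, once a suitable projective resolution has been produced.

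First I would describe the singularities of $\overline{\H^m/\Gamma}^{BB}$. They are of two disjoint types: the orbifold (quotient) singularities coming from fixed points of elliptic elements of $\Gamma$, which are locally analytically of the cyclic quotient form $\C^m/G$ treated in the first example, and the finitely many cusp singularities described in Section~\ref{resolutionofcusps}. I would then assemble the resolution $\pi\colon X\to \overline{\H^m/\Gamma}^{BB}$ by resolving each singular point separately: at the orbifold points I would use the equivariant toric desingularization of the Remark, and at each cusp the toroidal construction of Ehler recalled above, which by \cite{mum2}~IV.2 is realized by a sequence of blow-ups. Since the singular points are isolated and mutually disjoint, these local modifications glue to a global resolution; the use of blow-up/toric models (rather than Fujiki's construction) is precisely what guarantees that $\pi$ can be taken \emph{projective}.

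Next, it suffices by property (1) to prove that the tangential divisor $H=\mbox{tan}(\F_1,\dots,\F_m)$ equals $(m-1)E$. Away from $E$ the map $\pi$ is an isomorphism, and there the $\F_i$ are the images of the tautological foliations $dz_i=0$, which are nonsingular and everywhere transverse (in general position); hence $H$ is supported on $E$. It then remains to compute the multiplicity of $H$ along each component of $E$, which is a purely local question near the singular points.

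Finally, I would invoke the two local lemmas: Lemma~\ref{tangencyorder} at the orbifold points and the cusp lemma of Section~\ref{resolutionofcusps} at the cusps. Each shows that in a neighborhood of the corresponding piece of $E$ one has $H=(m-1)E$. Since these computations agree and together cover a neighborhood of the exceptional divisor, and $H$ is supported on $E$, we conclude $H=(m-1)E$ globally. Substituting into property (1) yields
$$N_{\F_1}^*\otimes\cdots\otimes N_{\F_m}^*=K_X\otimes\O\big(-(m-1)E\big),$$
as desired. The main obstacle I anticipate is not the tangency computation---already done locally---but the bookkeeping needed to produce a single \emph{projective} resolution simultaneously resolving the cusps and the orbifold points, since (as the Remark notes) projectivity is not transparent in Fujiki's approach and must be secured through the toric/blow-up models.
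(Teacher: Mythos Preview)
Your proposal is correct and follows exactly the approach indicated in the paper, which simply states that the theorem is obtained by ``putting together the different informations collected above''---namely property (1) of the tangential divisor, Lemma~\ref{tangencyorder} at the orbifold points, and the cusp lemma of Section~\ref{resolutionofcusps}. You are in fact more explicit than the paper about why $H$ is supported on $E$ and about securing projectivity via the toric/blow-up models, but these are precisely the details the paper leaves to the reader.
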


\section{Leafwise hyperbolicity of Hilbert modular foliations}

We take the same notations as above. Let ${\mathcal G}_i$ be the codimension one foliation on ${\overline{{\H}^m / \Gamma}}$ obtained by projecting the $i^{th}$ tautological foliation on ${\H}^m$ given by $dz_i=0$, a  projective resolution $\pi: X\rightarrow {\overline{{\H}^m / \Gamma}}$, $E=\pi^{-1} (\{p_1,...,p_l\})$ is the exceptional divisor and ${\F}_i= {{\pi}}^* {\mathcal G}_i$.

We shall investigate the (Brody) hyperbolicity of the leaves of ${\mathcal G}_i$. In the case of surfaces, the hyperbolicity of the leaves of these foliations is already stated in \cite{MP} and \cite{Tib} although both proofs are not complete. We generalize these statements and prove indeed that

\begin{proposition}\label{LWHYP}
The leaves of the Hilbert modular foliations ${\mathcal G}_i$ are hyperbolic.
\end{proposition}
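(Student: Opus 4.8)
The plan is to determine the intrinsic conformal type of a leaf by pulling everything back to the uniformisation $q:\H^m\to\H^m/\Gamma$. On $\H^m$ the foliation defined by $dz_i=0$ has as leaves the slices $L_c=\{z_i=c\}\cong\H^{m-1}$, $c\in\H$, and $\G_i$ is by definition the image of this foliation under $q$. I would first check that a leaf of $\G_i$ through a regular point is the image $q(L_c)$ with its immersed complex structure, and that $q$ restricted to $L_c$ identifies this leaf with the quotient $L_c/\Gamma_c$, where $\Gamma_c=\{\gamma\in\Gamma:\gamma^{(i)}(c)=c\}$ is the stabiliser of the slice and $\gamma^{(i)}$ denotes the $i$-th component of $\gamma\in\mathrm{Aut}(\H)^m$; indeed two points of $L_c$ have the same image under $q$ exactly when they differ by an element of $\Gamma$ preserving the value $z_i=c$. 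Since $\Gamma$ acts properly discontinuously on $\H^m$ and $L_c$ is a closed $\Gamma_c$-invariant submanifold, $\Gamma_c$ acts properly discontinuously by biholomorphisms on $L_c\cong\H^{m-1}$. (Note that for an irreducible $\Gamma$ with $m\ge 2$ the $i$-th projection of $\Gamma$ is dense, so these leaves are typically dense in $\H^m/\Gamma$; hyperbolicity must therefore be understood for the leaf with its intrinsic, not its induced, topology.)

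The heart of the proof is then that $\H^{m-1}$, a product of discs, is complete Kobayashi hyperbolic, and that its Poincar\'e metric is invariant under $\mathrm{Aut}(\H)^{m-1}\supset\Gamma_c$, hence descends to a hyperbolic (orbifold) metric on $L_c/\Gamma_c$. Given a nonconstant holomorphic $g:\C\to L_c/\Gamma_c$, I would lift it through the orbifold covering $L_c\to L_c/\Gamma_c$ to $\tilde g:\C\to\H^{m-1}$, which is possible because $\C$ is simply connected; the Schwarz--Pick lemma forces $\tilde g$ to be constant, whence $g$ is constant. To deal rigorously with the elliptic fixed points of $\Gamma_c$ (the orbifold locus) I would either invoke Selberg's lemma to replace $\Gamma$ by a torsion-free finite-index subgroup, reducing to an honest covering $\H^{m-1}\to L_c/\Gamma_c'$, or argue directly with the descended invariant metric, which is a genuine length metric of negative curvature bounded away from $0$ and so admits no entire curve.

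The step I expect to demand the most care, and the gap in the surface treatments of \cite{MP} and \cite{Tib}, is the behaviour at the cusps of $\overline{\H^m/\Gamma}$. In the toroidal coordinates of Section \ref{resolutionofcusps} the foliation $\G_i$ is the logarithmic foliation $\mu_1^{(i)}\frac{du_1}{u_1}+\cdots+\mu_m^{(i)}\frac{du_m}{u_m}=0$, whose integral leaves over the whole torus $T=\C^m/N$ are uniformised by $\C^{m-1}$ through $u_j=\exp(\cdots)$ and are therefore \emph{not} hyperbolic. I would dispel this apparent contradiction by keeping track of which locus is actually swept out by the interior $\H^m/\Gamma$: under $\pi$ the preimage of $\H^m/N$ is a proper subdomain of $T$, cut out by positivity conditions reflecting the total positivity of the $\mu_j$, whose frontier in the model is exactly the normal crossing divisor $F$ where some $u_j=0$. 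Since $F$ maps to the cusp under $\hat\pi$ and hence does not belong to $\H^m/\Gamma$, the $\C^{m-1}$-uniformised leaves live entirely in the added boundary; passing to $\overline{\H^m/\Gamma}$ only adjoins the finitely many cusps, which are singular points of $\G_i$ and cannot contain an $(m-1)$-dimensional leaf. Consequently every regular leaf of $\G_i$ meets $\H^m/\Gamma$ and, by the first two paragraphs, is one of the hyperbolic quotients $L_c/\Gamma_c$, which is what has to be shown.
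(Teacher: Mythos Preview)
Your reduction of the leaf to a quotient $L_c/\Gamma_c$ with $\Gamma_c$ acting properly discontinuously on $L_c\cong\H^{m-1}$ is correct, and when $\Gamma_c$ is torsion-free the covering argument is fine. The gap is in the orbifold case, and neither of your proposed fixes closes it.

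The assertion that a holomorphic map $g:\C\to L_c/\Gamma_c$ lifts through the ``orbifold covering'' $L_c\to L_c/\Gamma_c$ because $\C$ is simply connected is false: simple connectivity only guarantees lifts through \emph{unramified} coverings, and here the quotient map is branched over the images of fixed points. Your Selberg fix does not repair this: passing to a torsion-free $\Gamma'\le\Gamma$ replaces the leaf by $L_c/\Gamma_c'$, and you are then implicitly using that hyperbolicity descends along the finite branched map $L_c/\Gamma_c'\to L_c/\Gamma_c$. But a finite quotient of a Brody hyperbolic space need not be Brody hyperbolic: any hyperelliptic curve of genus $\ge 2$ maps two-to-one onto $\P^1$. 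The same example kills the descended-metric argument as written: the pushforward of the hyperbolic metric to $\P^1$ is a complete length metric with curvature $-1$ on the smooth locus, yet $\P^1$ is full of entire curves; the cone points carry a positive curvature concentration that your phrase ``negative curvature bounded away from $0$'' glosses over.

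The paper proceeds quite differently at this point. It exploits an arithmetic feature of the Hilbert modular group that you do not use: each projection $p_j:\Gamma_K\hookrightarrow SL_2(\R)$ is \emph{injective}. Since any two elements of $\Gamma_c$ have $i$-th components in the abelian stabilizer $SO(2)_c$, injectivity of $p_i$ forces all of $\Gamma_c$ to be abelian, and hence every $h\in\Gamma_c$ commutes with a given torsion element $g$. As $g$ has a unique fixed point in $\H^m$, $h$ must fix it too; thus $\Gamma_c$ coincides with the (finite) point stabiliser. With $\Gamma_c$ finite, the paper then proves hyperbolicity of $\H^{m-1}/\Gamma_c$ directly (Lemma~\ref{hypdisk}) by averaging a linear function over $\Gamma_c$ to produce a bounded holomorphic function separating any two given points, so that any entire curve would yield a nonconstant bounded function on $\C$. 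This bounded-function argument is precisely what survives the hyperelliptic obstruction, since it uses that $\H^{m-1}$ is a bounded domain rather than a compact manifold.

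Your final paragraph on the cusps is essentially correct but is not part of the proof of the proposition; the paper treats that issue separately in Corollary~\ref{entirecurvetangent}.
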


To prove this, we need a lemma.

\begin{lemma}\label{hypdisk}
Let $\Gamma \subset \mbox{Aut}({\H^m})$ be a finite group. Then ${\H}^m / \Gamma$ is hyperbolic.
\end{lemma}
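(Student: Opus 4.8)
The plan is to prove hyperbolicity directly in the sense of Kobayashi, i.e. to show that the Kobayashi pseudodistance $d_Y$ of the (possibly singular) complex space $Y=\H^m/\Gamma$ is nondegenerate. Write $\pi:\H^m\to Y$ for the quotient map. Since $\H^m\simeq\D^m$ is a bounded domain, it is complete hyperbolic; in particular its Kobayashi pseudodistance $d_{\H^m}$ is a genuine distance, and each $g\in\Gamma\subset\mathrm{Aut}(\H^m)$ acts as a $d_{\H^m}$-isometry.

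Fix distinct points $\bar p\neq\bar q$ in $Y$ together with a lift $p\in\pi^{-1}(\bar p)$, and introduce the $\Gamma$-invariant function $u(x)=\min_{g\in\Gamma}d_{\H^m}(p,gx)$. Because each $g$ is an isometry one checks that $u$ is $1$-Lipschitz for $d_{\H^m}$, and being $\Gamma$-invariant it descends to a continuous function $\bar u$ on $Y$ with $\bar u(\bar p)=0$ and $\bar u(\bar q)=\min_{g}d_{\H^m}(p,gq)=:\delta$. Since $\bar p\neq\bar q$ means $p\notin\Gamma q$ and $d_{\H^m}$ is a true distance, $\delta>0$.

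The heart of the argument is to prove that $\bar u$ is distance-decreasing from $(Y,d_Y)$ to $(\R,|\cdot|)$; granting this we get $d_Y(\bar p,\bar q)\ge|\bar u(\bar q)-\bar u(\bar p)|=\delta>0$, and hyperbolicity follows. By the definition of the Kobayashi pseudodistance through chains of holomorphic disks, it suffices to check that $\bar u\circ\phi$ is $1$-Lipschitz for the Poincaré metric, for every holomorphic disk $\phi:\D\to Y$. Over the regular locus $Y_{\mathrm{reg}}$ the map $\pi$ is a local biholomorphism, so $\phi$ admits local lifts $\tilde\phi$ there and $\bar u\circ\phi=u\circ\tilde\phi$ is $1$-Lipschitz, being the composition of the distance-decreasing $\tilde\phi$ with the $1$-Lipschitz $u$.

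The main obstacle is precisely the branch locus $B=\pi(\mathrm{Fix}\,\Gamma)$: holomorphic disks do not lift globally through the branched cover $\pi$ (already $w=z^2$ shows this), so the lifting must be localized. If $\phi(\D)\not\subset B$, then $\phi^{-1}(B)$ is a discrete subset of $\D$; the Lipschitz bound holds off this discrete set and extends across it by continuity of $\bar u$. If instead $\phi(\D)\subset B$, I would argue by induction on $\dim\H^m$: the fixed loci of elements of $\Gamma$ are closed complex submanifolds of the bounded domain $\H^m$, hence themselves complete hyperbolic, and the corresponding strata of $B$ are quotients of these submanifolds by the (finite) stabilizers, to which the inductive hypothesis applies, the base case being isolated points. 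This settles the Lipschitz property of $\bar u\circ\phi$ in all cases and completes the proof.
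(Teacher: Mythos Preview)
Your key inequality $d_Y(\bar p,\bar q)\ge \delta:=\min_{g\in\Gamma}d_{\H^m}(p,gq)$ is false, so the argument cannot be completed as written. Take $m=1$ and $\Gamma=\{\pm 1\}$ acting on $\D\simeq\H$; the quotient is biholomorphic to $\D$ via $w=z^2$, so $d_Y([0],[r])=d_\D(0,r^2)$, whereas $\delta=d_\D(0,r)$, and the former is strictly smaller for $0<r<1$. Concretely, your descended function is $\bar u(w)=\tanh^{-1}\sqrt{|w|}$, and for $w_1=\varepsilon$, $w_2=4\varepsilon$ (both off the branch point) one finds $|\bar u(w_1)-\bar u(w_2)|/d_\D(w_1,w_2)\sim \sqrt{\varepsilon}/(3\varepsilon)\to\infty$. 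Thus $\bar u$ is not $1$-Lipschitz even on the complement of the branch locus.

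The gap is in the local-lift step. A local lift $\tilde\phi:U\to\H^m$ is distance-decreasing from $(U,d_U)$ to $(\H^m,d_{\H^m})$; composing with the $1$-Lipschitz $u$ yields a map that is $1$-Lipschitz for $d_U$, not for $d_\D|_U$. Since $d_U\ge d_\D|_U$, this is the wrong direction. For unramified covers the lift extends to all of $\D$ and the problem disappears, but branching is precisely what obstructs the extension---the square root above is the basic example. The continuity extension across $\phi^{-1}(B)$ therefore has nothing to extend: the Lipschitz bound already fails on the open dense set.

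The paper argues on the Carath\'eodory side instead: given $\bar p\neq\bar q$, it symmetrises a linear function on $\D^m\simeq\H^m$ over $\Gamma$ to obtain a bounded $\Gamma$-invariant holomorphic function separating the two fibres; this descends to $Y$ and, by Liouville, forces every entire curve $\C\to Y$ to be constant. Note that this uses that $\H^m$ is a bounded domain, not merely that it is Kobayashi hyperbolic.
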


\begin{proof}
Consider the polydisc $\Delta^m \simeq {\H}^m$.
Suppose there is a non-constant holomorphic map $f: \C \to {\Delta}^m / \Gamma$ and take $x \neq y$ two different points on $f(\C) \subset {\Delta}^m / \Gamma$. Now, we construct a bounded holomorphic function on ${\Delta}^m / \Gamma$ which is $0$ on $x$ and not $0$ on $y$. Let $x_i \in {\Delta}^m$ and $y_k \in {\Delta}^m$ be the preimages of $x$ and $y$ under the projection ${\Delta}^m \to {\Delta}^m / \Gamma$. Take a linear function $h(z)$ on ${\Delta}^m$  such that the set $\{h(z)=0\}$ passes through $x_1$ but avoids the points $y_k$. Then $g(z):= \prod_{\alpha \in \Gamma} h(\alpha.z)$ is invariant under $\Gamma$. It gives a holomorphic function $G$ on ${\Delta}^m / \Gamma$ such that $G(x)=0$ but $G(y) \neq 0$. Then $G\circ f$ would give a non-constant bounded holomorphic function on $\C$. This is a contradiction.
\end{proof}

Now, we prove the proposition

\begin{proof}
We have the following dichotomy: either the leaf passes through an orbifold point or not. In the last case, the stabilizer of the leaf in $\H^m$ has no fixed point and hyperbolicity is immediate. In the first case, by the preceding lemma, it is sufficient to prove that the stabilizer is finite. One observes that the action of the Hilbert modular group on $\H^m$ implies that this stabilizer is a commutative group. Indeed, the Hilbert modular group of the totally real number field $K$, $\Gamma_K=SL(2, \mathcal O)$ acts on $\H^m$ via the embedding of groups $SL(2,K) \hookrightarrow SL(2, \R)^m.$ We consider now the projections $p_i: SL(2, \R)^m \to SL(2, \R)$ whose restrictions to $\Gamma_K$ are injective. If two elements $g$ and $h$ of $\Gamma_K$ are in the stabilizer of a leaf, it means that the projections $g_i:=p_i(g)$ and $h_i:=p_i(h)$ of $g$ and $h$ on the corresponding factor of $SL(2, \R)^m$ have the same fixed point and so commute. This implies that $g$ and $h$ must commute. Therefore all projections $g_k$ and $h_k$ commute. Now suppose that $g$ is in the stabilizer of the orbifold point, which is assumed to be non trivial. Then any other element $h$ of the stabilizer of the leaf commutes with $g$ which means that $h$ is in fact in the stabilizer of the orbifold point. We have proved that the stabilizer of the leaf and of the orbifold point coincide, which concludes the proof.
\end{proof}

\begin{remark}\label{Stein}
As we have just seen, leaves passing through an elliptic fixed point are Stein and hyperbolic. In particular, they do not contain algebraic curves. This is also true for other leaves.

\end{remark}

 \begin{proposition}\label{NC}
The leaves of the Hilbert modular foliations ${\mathcal G}_i$ do not contain algebraic curves.
\end{proposition}

\begin{proof}
We just have to consider the case of leaves which do not pass through an elliptic fixed point. Consider a leaf of the foliation given by $dz_1=0$ in $\H^m$. As explained above, the stabilizer $\Gamma_1$ of this leaf is a commutative group. Therefore the projections $p_i(\Gamma_1)$ consist of automorphisms of $\H$ of the same type (elliptic, hyperbolic or parabolic) and with the same fixed points in $\overline{\H}$. Observe that if $\Gamma_1$ is not trivial there is no parabolic projection $p_i(\Gamma_1)$ because conjugates of parabolic elements are still parabolic. As the leaf does not pass through an elliptic fixed point, there is a projection which is not elliptic. Suppose that the second projection $p_2(\Gamma_1)$ is hyperbolic. Therefore, up to conjugation, $p_2(\Gamma_1)$ consist of hyperbolic elements $\gamma_2: z_2 \to \lambda z_2$ where $\lambda \in \R^+.$ So, the form $\omega=\frac{dz_2}{z_2}$ is invariant under $\Gamma_1$ and gives a holomorphic form on the corresponding leaf of ${\mathcal G}_1$. Now, observe that the periods of $\omega$ are of the form $\log \lambda$, in particular real. Therefore, given an algebraic curve $C$ in the leaf, $\omega$ has to vanish when restricted to the curve. In particular, such a curve has to be also contained in a leaf of the foliation given by $dz_2=0$.

Now suppose that the second projection $p_2(\Gamma_1)$ is elliptic. Taking the model of the disc, $p_2(\Gamma_1)$ consist of rotations $\gamma_2: z_2 \to \zeta z_2$ where $|\zeta|=1.$ Therefore $|z_2|$ is invariant under $\Gamma_1$ and gives a plurisubharmonic function on the leaf. It has to be constant on any algebraic curve. Again, such a curve has to be contained in a leaf of the foliation given by $dz_2=0$.

This true for all $i$, so there cannot be any algebraic curve in a leaf.
\end{proof}

Now, let us a see a consequence of proposition \ref{LWHYP}.

\begin{corollary}\label{entirecurvetangent}
  Let $f:\mathbb C\rightarrow X$ an entire curve tangent to one of the $m$ tautological foliations ${\mathcal F}_i$ on $X$. Let $E$ the exceptional divisor arising from the desingularization of the set of singular points $\{p_1,...,p_l\}$. Assume that $f(\mathbb C)\not\subset E$. Then, $f$ is constant.
\end{corollary}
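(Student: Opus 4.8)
The plan is to combine the invariance of the exceptional divisor $E$, established in the tangency computations above, with the leafwise hyperbolicity of Proposition \ref{LWHYP}. The guiding idea is that a curve tangent to $\mathcal{F}_i$ must lie in a single leaf, that leaves are hyperbolic, and that such a leaf cannot host a non-constant entire curve; the only real work is to prevent $f$ from slipping into $E$ through the singular locus of the foliation.

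First I would note that, since $f$ is tangent to $\mathcal{F}_i$ and $f(\mathbb{C})\not\subset E$, the set $Z=f^{-1}(E)$ is a discrete (hence closed) subset of $\mathbb{C}$, so that $\mathbb{C}\setminus Z$ is connected. On $\mathbb{C}\setminus Z$ the curve lands in $X\setminus E$, where $\mathcal{F}_i$ is a regular foliation coinciding with $\mathcal{G}_i$ through the isomorphism $\pi$. The local structure of a curve tangent to a regular foliation, together with analytic continuation of leaves along the connected set $\mathbb{C}\setminus Z$, then forces $\pi\circ f(\mathbb{C}\setminus Z)$ to lie in a single leaf $L$ of $\mathcal{G}_i$.

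Second, I would rule out that $f$ meets $E$ at all. In the toroidal coordinates of Section \ref{resolutionofcusps}, the foliation $\mathcal{F}_i$ is defined near $E$ by the logarithmic $1$-form $\sum_j \mu_j^{(i)}\,\frac{du_j}{u_j}$ with all weights $\mu_j^{(i)}>0$ (the $\mu_j$ being totally positive), and similarly $\mathcal{F}_i$ admits near the cyclic singularities a first integral of the shape $\prod_l u_l^{a_l}$ with $a_l\ge 0$. Consequently the function $\varphi=\sum_j \mu_j^{(i)}\log|u_j|$, which is the real part of the (possibly multivalued) first integral $\log G_i$, is \emph{constant} along the leaf $L$. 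If $f$ approached $E$ at some $z_0\in Z$, then some coordinate $u_j\circ f$ would vanish at $z_0$, whence $\varphi\circ f\to -\infty$ as $z\to z_0$; but $\varphi\circ f$ is the finite constant attached to $L$ on $\mathbb{C}\setminus Z$, a contradiction. Therefore $Z=\emptyset$ and $f(\mathbb{C})\subset L$.

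Finally, by Proposition \ref{LWHYP} the leaf $L$ is hyperbolic, so it carries no non-constant entire curve, and $f$ must be constant. When the relevant stabilizer acts freely this is completely transparent: the universal cover of $L$ is $\mathbb{H}^{m-1}\cong \Delta^{m-1}$, so by simple connectivity of $\mathbb{C}$ the map $f$ lifts to a bounded holomorphic map $\tilde f:\mathbb{C}\to \Delta^{m-1}$, which is constant by Liouville. I expect the second step to be the main obstacle: one must guarantee that the entire curve cannot escape into the invariant divisor $E$ across the foliation singularities, and this is precisely where the total positivity of the weights $\mu_j^{(i)}$ is decisive, forcing the first integral to degenerate to $0$ along $E$ and thus keeping a leaf off $E$ uniformly away from $E$ as seen by $f$.
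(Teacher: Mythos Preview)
Your argument coincides with the paper's at its core: use the strict positivity of the weights $\mu_j^{(i)}$ near the cusps to show that a curve tangent to $\mathcal{F}_i$ and not contained in $E$ cannot meet $E_c$, and then invoke Proposition~\ref{LWHYP}. Where you diverge is in also trying to exclude that $f$ meets the elliptic part $E_e$, and here your reasoning is incomplete. For the cyclic quotient singularities the exponents $b_{li}$ in $z_i=\prod_l u_l^{b_{li}}$ are only \emph{nonnegative} (as you yourself note), so the analogue of your function $\varphi=\sum_l b_{li}\log|u_l|$ need not tend to $-\infty$ along every branch of $E_e$; your contradiction ``some $u_j\circ f$ vanishes, hence $\varphi\circ f\to -\infty$'' can fail precisely when $f$ approaches a component $\{u_l=0\}$ with $b_{li}=0$. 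As written, your second step therefore only establishes $f^{-1}(E_c)=\emptyset$, not $Z=\emptyset$.

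The paper sidesteps this issue entirely by observing that one does \emph{not} need to keep $f$ away from $E_e$. Proposition~\ref{LWHYP} (via the dichotomy in its proof and Lemma~\ref{hypdisk}) already asserts hyperbolicity for leaves of $\mathcal{G}_i$ passing through orbifold points. Hence, once $f$ avoids $E_c$, the composition $\pi\circ f:\mathbb{C}\to \mathbb{H}^m/\Gamma$ is defined on all of $\mathbb{C}$, is tangent to $\mathcal{G}_i$, and thus lies in a single hyperbolic leaf; this forces $\pi\circ f$, and therefore $f$, to be constant. So your unnecessary detour through $E_e$ is the only weak point; drop it, and your proof becomes exactly the paper's.
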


\begin{proof}
From the proposition \ref{LWHYP}, it is sufficient to prove that such curves $f$ avoid the hypersurface $E_c\subset E$ corresponding to the cusps resolution.
As  described in section \ref{resolutionofcusps}, the change of coordinates in the resolution of cusps is (locally) given by
$$z_i=\sum_j a_{ji} \log u_j.$$ where the coefficients $a_{ij}$ are {\it positive } real numbers. In particular the (locally defined)  real valued function 

$$g=\prod_j{|u_j|}^{a_{ij}}$$ is continuous, constant along the leaves of ${\mathcal F}_i$ and vanishes precisely on $F$. Let $\mathscr C$ be a germ of analytic curve tangent to ${\mathcal F}_i$ passing through $p\in E_c$. For obvious continuity reasons, $g$ necessarily vanishes along $\mathscr C$, hence $\mathscr C$ is contained in $E_c$. This concludes the proof.

\end{proof}
\section{Vojta's conjecture}
In this section, we prove the boundedness-theorem A. 

\begin{theorem}
Let $X$ be a Hilbert modular variety with $E \subset X$ the exceptional divisor, $C$ a smooth projective curve and $S \subset C$ a finite subset. Then
$$\deg_C(f^*(K_X+E)) \leq n(2g(C)-2+s),$$
for all morphisms $f: C \to X$ such that $f^{-1}(E) \subset S$.
\end{theorem}

\begin{proof}
The morphism $f: C \to X$ induces a morphism $f': C \to \P(T_X(- \log E))$ and by definition we have an inclusion $f'^*(\mathcal{O}(1)) \hookrightarrow K_C(f^*(E)_{red})$. So we easily obtain the algebraic tautological inequality
$$\deg_C(f'^*(\mathcal{O}(1)) \leq 2g(C)-2+s.$$
Now, according to proposition \ref{NC}, $f(C)$ is not contained in a leaf of a Hilbert modular foliation.

Let $\mathcal{F}$ be one of the canonical Hilbert modular foliation on $X$. To the foliation $\mathcal{F}$ is associated a divisor $Z \subset \P(T_X(- \log E)),$ linearly equivalent to $\mathcal{O}(1)+{N}_\mathcal{F}(-E).$ Then the algebraic tautological inequality gives
$$\deg_C(f^*({N}^*_\mathcal{F}(E))) \leq \deg_C(f^*(Z))+ \deg_C(f'^*({N}^*_\mathcal{F}(E))) \leq 2g(C)-2+s.$$
The first inequality comes from the non-tangency of algebraic curves to the foliation which implies $0 \leq \deg_C(f^*(Z))$.

Now we use the tangency formula \ref{TAN}
$$N_{{\F}_1}^*\otimes...\otimes N_{{\F}_n}^*=K_X\otimes{\mathcal O}\big(-(n-1)E\big).$$
We obtain

$$\deg_C(f^*(K_X+E))=\sum_i \deg_C(f^*({N}^*_{\mathcal{F}_i}(E))) \leq n(2g(C)-2+s).$$

\end{proof}
\section{The second main theorem for entire curves into Hilbert modular varieties}
\subsection{Nevanlinna theory}
Let $X$ be a projective manifold. Nevanlinna theory is an intersection theory for holomorphic curves $f:\C \to X$ and divisors or line bundles on $X$. 

Let $L$ be a line bundle on $X$ equipped with a smooth Hermitian metric and $c_1(L)$ be its Chern form. Then the order function is defined by
$$T_f(r, L)=\int_1^r \frac{dt}{t} \int_{\Delta(t)} f^*c_1(L),$$
where $\Delta(t):=\{z \in \C | |z| <t \}.$
Once an ample line bundle $L$ is chosen, we denote $T_f(r):=T_f(r, L).$

If $D \subset X$ is an effective divisor such that $f(\C) \not \subset \operatorname{supp} D$, we define the counting function by
$$ N(r, f^*D)= \int_1^r \left(\sum_{z \in \Delta(t)} \operatorname{ord}_z f^*D \right) \frac{dt}{t},$$
and for an integer $k \geq 1$ the truncated counting function by
$$N_k(r, f^*D)= \int_1^r \left(\sum_{z \in \Delta(t)} \min (k, \operatorname{ord}_z f^*D) \right) \frac{dt}{t}.$$

The classical Nevanlinna inequality (see for example \cite{Shab}) gives
$$N(r, f^*D) \leq T_f(r, D) + O(1).$$

\subsection{The Second Main Theorem}
We will prove the following result which generalizes to any dimension the case of surfaces dealt with in \cite{Tib}.

\begin{theorem}
Consider a  projective resolution $\pi: X\rightarrow {\overline{{\H}^n / \Gamma}}$ given by Theorem \ref{TAN}, $E=\pi^{-1} (\{p_1,...,p_l\})$ the exceptional divisor, $K_X$ the canonical line bundle of $X$. Let $f: \C \to X$ be a non-constant entire curve such that $f(\C)$ is not contained in $E$. Then
$$T_f(r, K_X)+T_f(r, E) \leq nN_1(r, f^*E)+S_f(r) \|,$$
where $S_f(r)=O(\log^+ T_f(r))+o(\log r),$ and $\|$ means that the estimate holds outside some exceptional set of finite measure.
\end{theorem}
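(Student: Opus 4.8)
The plan is to reduce the global inequality to a single estimate per foliation and then sum. By the tangency formula (Theorem \ref{TAN}), $N_{\F_1}^*\otimes\cdots\otimes N_{\F_n}^* = K_X\otimes\O(-(n-1)E)$; tensoring with $\O(nE)$ and distributing one copy of $\O(E)$ into each of the $n$ factors gives
$$ K_X\otimes\O(E) \;=\; \bigotimes_{i=1}^n\big(N_{\F_i}^*\otimes\O(E)\big). $$
Passing to order functions, $T_f(r,K_X)+T_f(r,E) = \sum_{i=1}^n T_f\big(r, N_{\F_i}^*\otimes\O(E)\big)+O(1)$. It therefore suffices to prove, for each $i$, the per-foliation bound
$$ T_f\big(r, N_{\F_i}^*\otimes\O(E)\big) \;\le\; N_1(r,f^*E) + S_f(r)\,\|, $$
since summing the $n$ inequalities reproduces exactly the factor $n$ in front of $N_1(r,f^*E)$ together with the desired left-hand side.

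To obtain this per-foliation bound I would use that, because $E$ is invariant under $\F_i$ (Section \ref{resolutionofcusps} and its cyclic analogue), the foliation is defined near $E$, in the resolution coordinates, by the closed logarithmic form $\omega_i=\sum_j \mu_j^{(i)}\,du_j/u_j$ with all $\mu_j^{(i)}>0$; globally $\omega_i$ is a twisted logarithmic $1$-form with at worst simple poles on $E$, i.e. a meromorphic section of $\Omega^1_X(\log E)\otimes\big(N_{\F_i}^*\otimes\O(E)\big)^{*}$. Writing $f^*\omega_i=b_i\,dz$, the coefficient $b_i$ is a meromorphic section of $f^*\big(N_{\F_i}^*\otimes\O(E)\big)^{*}$ whose poles are simple and supported exactly on $f^{-1}(E)$: if $f$ meets $E$ at $z_0$, then $f^*(du_j/u_j)$ contributes a residue equal to the local intersection multiplicity, which is a strictly positive real number by total positivity, hence a genuine simple pole irrespective of that multiplicity. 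Dualizing, $s_i:=b_i^{-1}$ is a meromorphic section of $f^*\big(N_{\F_i}^*\otimes\O(E)\big)$ with $Z_{s_i}=P_{b_i}$, the \emph{reduced} divisor supported on $f^{-1}(E)$.

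Applying the First Main Theorem (Jensen's formula) to $s_i$ with respect to the transverse Poincaré metric on $N_{\F_i}$ — the one induced by the hyperbolic metric of the $i$-th transversal factor $\H$ — gives
$$ T_f\big(r,N_{\F_i}^*\otimes\O(E)\big) = N(r,Z_{s_i}) - N(r,P_{s_i}) + \frac{1}{2\pi}\int_0^{2\pi}\log\|b_i\|\,d\theta + O(1). $$
Here $N(r,Z_{s_i})=N_1(r,f^*E)$ precisely because the poles of $b_i$ are simple — this is the mechanism that produces the truncation to level one — while $-N(r,P_{s_i})=-N(r,Z_{b_i})\le 0$ only helps. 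Everything thus reduces to absorbing the proximity term into $S_f(r)$, that is, to the estimate
$$ \frac{1}{2\pi}\int_0^{2\pi}\log^+\|b_i(re^{i\theta})\|\,d\theta = \frac{1}{2\pi}\int_0^{2\pi}\log^+\frac{|g_i'|}{\operatorname{Im} g_i}\,d\theta = S_f(r)\,\|, $$
where $g_i=z_i\circ\tilde f$ is the transverse coordinate along the lifted curve, so that $\|b_i\|$ is exactly the hyperbolic derivative $|g_i'|/\operatorname{Im} g_i$, a single-valued function on $\C$ even though $g_i$ itself is only defined up to the real translations coming from the lattice $N$.

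This last estimate is the heart of the matter and the step I expect to be the main obstacle. It is a logarithmic-derivative lemma for the transverse hyperbolic coordinate, and it is exactly here that leafwise hyperbolicity (Proposition \ref{LWHYP}) is used: the negative curvature of the Poincaré metric on $\H$ is what forces the hyperbolic derivative $|g_i'|/\operatorname{Im} g_i$ to be small on average. Concretely I would first bound the transverse pullback area $\int_{\Delta(t)}f^*\gamma_i$ (with $\gamma_i$ the transverse Poincaré form) by an order function via an Ahlfors--Schwarz argument, and then pass from this area bound to the circle-mean bound through the standard concavity (calculus) lemma of Nevanlinna theory, which is what manufactures the error term $S_f(r)=O(\log^+T_f(r))+o(\log r)$ valid off a set of finite measure. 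The points requiring care are the multivaluedness of $g_i$ (its holonomy is by real translations, under which $|g_i'|/\operatorname{Im} g_i$ is invariant) and the integrable logarithmic singularities of this function along $f^{-1}(E)$, neither of which affects the mean. Summing the $n$ resulting per-foliation inequalities and invoking the tangency formula once more then yields the stated Second Main Theorem.
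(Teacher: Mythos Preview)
Your overall architecture---reduce to the per-foliation bound $T_f(r,N_{\F_i}^*\otimes\O(E))\le N_1(r,f^*E)+S_f(r)$ via the tangency formula and then sum---is exactly the paper's. The difference is in how that per-foliation bound is obtained. The paper does not compute with $f^*\omega_i$ at all: it invokes McQuillan's Tautological inequality $T_{f'}(r,\O(1))\le N_1(r,f^*E)+S_f(r)$ for the lift $f':\C\to\P(T_X(-\log E))$, observes that the divisor $Z=\P(T_{\F_i})$ has class $\O(1)+N_{\F_i}(-E)$, and uses Corollary~\ref{entirecurvetangent} (a consequence of leafwise hyperbolicity) to guarantee $f'(\C)\not\subset Z$, whence $T_{f'}(r,Z)\ge O(1)$ and the inequality drops out in one line. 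Your argument is essentially a by-hand rederivation of the Tautological inequality in this special case: your $b_i$ is the evaluation of $\omega_i$ on $f'$, its simple poles along $f^{-1}(E)$ are what produce the $N_1$ term, and the proximity bound you isolate as the ``main obstacle'' is precisely the logarithmic-derivative lemma that underlies McQuillan's statement.

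There is, however, a misattribution in your sketch. Leafwise hyperbolicity (Proposition~\ref{LWHYP}/Corollary~\ref{entirecurvetangent}) is \emph{not} what drives the proximity estimate; that estimate is the standard Nevanlinna calculus lemma and requires no curvature hypothesis on the transversal---indeed the Tautological inequality holds for arbitrary $X$. What leafwise hyperbolicity actually buys, both in the paper and in your argument, is that $f$ cannot be tangent to $\F_i$: without this your $b_i$ could vanish identically and $s_i=b_i^{-1}$ would not exist (equivalently, in the paper's language, $f'(\C)\subset Z$ and $T_{f'}(r,Z)$ need not be bounded below). You should invoke Corollary~\ref{entirecurvetangent} at the outset to exclude $b_i\equiv 0$, and then the proximity bound follows from the area estimate $\int_{\Delta(t)}|b_i|^2\,idz\wedge d\bar z\le O(T_f(t))$ (pull-back of a bounded-curvature form) plus concavity of the logarithm, with no further appeal to hyperbolicity. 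Also note that the Poincar\'e metric you place on the transversal $\H$ parametrized by $z_i$ has nothing to do with hyperbolicity of the \emph{leaves} $\{z_i=\text{const}\}$; these are distinct objects.
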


One of the main tool in the proof is the so-called Tautological inequality

\begin{theorem}[Tautological inequality \cite{McQ98}]
Let $X$ be a projective manifold, $A$ an ample line bundle, $D$ a normal crossing divisor on $X$, and $f:\C \to X$ an entire curve not contained in $D$. Let $f': \C \to \P(T_X(\log D)$ be the canonical lifting of $f$, then
$$T_{f'}(r, \O(1))\leq N_1(r, f^*D)+S_f(r) \|$$

\end{theorem}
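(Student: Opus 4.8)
The plan is to read off the $\O(1)$-height of the canonical lift as the proximity of a logarithmic derivative and then invoke the logarithmic derivative lemma, the truncation to $N_1$ being produced automatically by the fact that a logarithmic derivative has only \emph{simple} poles. First I would fix a smooth Hermitian metric $h$ on the logarithmic tangent bundle $T_X(\log D)$ over the compact manifold $X$; it induces a metric on the tautological bundle $\O(1)$ over $\P(T_X(\log D))$. Writing $\partial_z$ for the standard vector field on $\C$, the logarithmic differential sends $\partial_z$ to an (a priori meromorphic) section $s=df(\partial_z)$ of $f^*T_X(\log D)$: in local coordinates $(w_1,\dots,w_n)$ adapted to $D=\{w_1\cdots w_k=0\}$, and with $f_i=w_i\circ f$, the section $s$ has components $(f_i)'/f_i$ for $i\le k$ and $(f_i)'$ for $i>k$ in the logarithmic frame $w_1\partial_{w_1},\dots,w_k\partial_{w_k},\partial_{w_{k+1}},\dots,\partial_{w_n}$. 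The canonical lift is $f'(z)=(f(z),[s(z)])$, and $(f')^*\O(-1)$ is the line spanned by $s$.

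Second I would compute the height via Jensen's formula. Since $f'$ is holomorphic into $\P(T_X(\log D))$, the pullback $(f')^*\O(-1)$ is a genuine line bundle on $\C$ admitting a nowhere-vanishing holomorphic frame $\tilde s$ (the ``reduced'' generator), and $s=\lambda\,\tilde s$ for a scalar meromorphic function $\lambda$. Because $\log\|\tilde s\|_h^2$ is smooth, the Chern form computation gives
\[
T_{f'}(r,\O(1))=\int_0^{2\pi}\log\|\tilde s(re^{i\theta})\|_h\,\frac{d\theta}{2\pi}+O(1),
\]
while Jensen's formula applied to $\lambda$ yields $\int_0^{2\pi}\log|\lambda|\,\frac{d\theta}{2\pi}=N(r,\lambda{=}0)-N(r,\lambda{=}\infty)+O(1)$. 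Combining the two and using $\|s\|_h=|\lambda|\,\|\tilde s\|_h$, I would arrive at
\[
T_{f'}(r,\O(1))=\int_0^{2\pi}\log\|s(re^{i\theta})\|_h\,\frac{d\theta}{2\pi}-N(r,\lambda{=}0)+N(r,\lambda{=}\infty)+O(1).
\]

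Third I would bound the mean-value term by the logarithmic derivative lemma. In its geometric form (equivalently, the classical lemma applied to each logarithmic derivative $(f_i)'/f_i$, the transverse derivatives $(f_i)'$ being controlled by the boundedness of $h$ on the compact $X$) one has $\int_0^{2\pi}\log^+\|s(re^{i\theta})\|_h\,\frac{d\theta}{2\pi}=S_f(r)\,\|$, so the first term on the right is $\le S_f(r)$. For the counting terms, the zeros of $\lambda$ occur exactly at the ramification points of $f$, so $-N(r,\lambda{=}0)\le 0$ and may be discarded; the poles of $\lambda$ sit where $s$ blows up, that is at $z_0\in f^{-1}(\operatorname{supp}D)$. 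Here is the crucial point: if $f$ meets a branch $\{w_i=0\}$ of $D$ to order $\nu$ at $z_0$, then $(f_i)'/f_i=\nu/(z-z_0)+(\text{holomorphic})$ has a pole of order \emph{exactly one}, independently of $\nu$ and of how many branches of $D$ pass through $f(z_0)$; hence $\lambda$ has only simple poles and $N(r,\lambda{=}\infty)=N_1(r,f^*D)$. Assembling these estimates gives $T_{f'}(r,\O(1))\le N_1(r,f^*D)+S_f(r)\,\|$, as claimed.

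I expect the genuine difficulty to lie in the logarithmic derivative lemma itself with the sharp error term $S_f(r)=O(\log^+T_f(r))+o(\log r)$: this is the analytically substantial ingredient, and the smooth metric on the compact manifold is essential, since a naive flat estimate of $\int\log\|s\|_h$ would produce spurious terms of size $O(T_f(r))$ coming from the transverse derivatives $(f_i)'$. A secondary, more bookkeeping, obstacle is to globalize the local-frame description of $s$ and $\lambda$ so that the metric corrections are genuinely $O(1)$ and the simple-pole count over the various strata of the normal crossing divisor $D$ assembles exactly into $N_1(r,f^*D)$.
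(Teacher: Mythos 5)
You should note first that the paper contains no proof of this statement at all: it is imported from McQuillan \cite{McQ98} (hence the citation in the theorem header) and used as a black box in the proof of the Second Main Theorem. So the comparison is with the literature, not with an argument in the paper. Your route --- compute $T_{f'}(r,\O(1))$ by Green--Jensen using the metric induced on $\O(\pm 1)$ by a smooth metric $h$ on $T_X(\log D)$, split off a scalar meromorphic function $\lambda$ via a trivializing frame $\tilde s$ of $(f')^*\O(-1)$, observe that $\lambda$ has only simple poles, located exactly along $f^{-1}(\operatorname{supp} D)$ (because $(f_i)'/f_i$ has residue $\ord_{z_0} f_i \neq 0$ there, whatever the intersection multiplicity), and then invoke the lemma on logarithmic derivatives --- is not McQuillan's original argument but is essentially the standard later derivation of his inequality. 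The formal steps are correct: the simple-pole computation is precisely what converts $N$ into the truncated $N_1$, and discarding $-N(r,\lambda{=}0)\leq 0$ is legitimate.

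The one point that is wrong as written is the parenthetical justification of the analytic crux: the transverse derivatives $(f_j)'$ are \emph{not} ``controlled by the boundedness of $h$ on the compact $X$''. For $f_j=e^z$ one has $m(r,f_j')\sim T(r,f_j)$, and no bounded rescaling of a fixed local frame changes that. What makes $\int_0^{2\pi}\log^+\|s\|_h\,\frac{d\theta}{2\pi}=S_f(r)$ true is a change-of-frame phenomenon forced by compactness: on $\P^1$ with the Fubini--Study metric, for instance, $\|f'\partial_w\|_h=|f'|/(1+|f|^2)\leq \tfrac12|f'/f|$, so the compact metric converts honest derivatives into logarithmic ones; in general one must produce finitely many rational functions $\phi_\alpha$ on $X$ whose logarithmic differentials $d\phi_\alpha/\phi_\alpha$ generate the log cotangent sheaf at every point, bound $\|s\|_h$ up to constants by $\max_\alpha |(\phi_\alpha\circ f)'/(\phi_\alpha\circ f)|$, and apply the classical lemma to each $\phi_\alpha\circ f$ (with some bookkeeping for the extra poles of $d\phi_\alpha/\phi_\alpha$ away from $D$). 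Your closing paragraph shows you are aware of exactly this issue --- it contradicts the parenthetical --- but as written the only hard step of the proof rests on an incorrect justification. Repaired as above, your proposal is a correct reduction of the tautological inequality to the (known, nontrivial) geometric lemma on logarithmic derivatives with its sharp error term; structurally this is the same ``cite the hard part'' move the paper itself makes by quoting McQuillan, just with a different black box.
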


Let us give the proof of the Second Main Theorem for Hilbert modular varieties.

\begin{proof}
Let $\mathcal{F}$ be one of the canonical Hilbert modular foliation on $X$. Let us recall that to the foliation $\mathcal{F}$ is associated a divisor $Z \subset \P(T_X(- \log E)),$ linearly equivalent to $\mathcal{O}(1)+{N}_\mathcal{F}(-E).$ Indeed, with the foliation one can consider (outside the singular locus $\operatorname{Sing}(\mathcal{F})$ which has codimension at least $2$) the exact sequence
$$0 \to T_\mathcal{F} \to T_X(- \log E) \to {N}_\mathcal{F}(-E).$$ Then one takes $Z:=\P(T_\mathcal{F}).$ We have the exact sequence over $\P(T_X(- \log E))$
$$0\to  \mathcal{O}(1)\otimes\mathcal{O}(-Z) \to \mathcal{O}(1) \to \mathcal{O}(1)_{|Z} \to 0.$$ $\mathcal{O}(1)\otimes\mathcal{O}(-Z)$ has trivial restriction to each fibre of $\pi: \P(T_X(- \log E)) \to X$ hence is of the form $\pi^*L$. Taking (the dual of) the direct image of the exact sequence, we have
$$0 \to  T_\mathcal{F} \to T_X(- \log E) \to L^* \to 0,$$ which gives that $L^*={N}_\mathcal{F}(-E).$

According to corollary \ref{entirecurvetangent}, $f: \C \to X$ cannot be tangent to $\mathcal{F}$ which means that $f'$ is not contained in $Z$.

The Tautological inequality gives the following inequality
$$T_f(r,{N}^*_\mathcal{F}(E)) \leq T_{f'}(r, Z)+T_f(r,{N}^*_\mathcal{F}(E))\leq N_1(r, f^*E)+S_f(r) \|$$

Now we use the tangency formula \ref{TAN}
$$N_{{\F}_1}^*\otimes...\otimes N_{{\F}_n}^*=K_X\otimes{\mathcal O}\big(-(n-1)E\big).$$

Combining both observations we obtain
$$T_f(r, K_X) +T_f(r, E) \leq \sum_i T_f(r,{N}^*_{\mathcal{F}_i}(E)) \leq nN_1(r, f^*E)+S_f(r) \|$$
which concludes the proof.
\end{proof}

\begin{corollary}
Let $X$ as above be a Hilbert modular variety of general type. Let $f: \C \to X$ be a non-constant entire curve which ramifies  over $E$ with order at least $n$, i.e. $f^*E\geq n \operatorname{supp} f^*E$. Then $f(\C)$ is contained in $E$.
\end{corollary}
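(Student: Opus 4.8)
The plan is to derive this Corollary directly from the Second Main Theorem by exploiting the hypothesis on the ramification order to estimate the truncated counting term. The key observation is that the truncated counting function $N_1(r, f^*E)$ counts the support of the pullback divisor (each zero contributes at most $1$), whereas the full counting function $N(r, f^*E)$ records the actual multiplicities. The ramification hypothesis $f^*E \geq n\operatorname{supp} f^*E$ says precisely that every point in the pullback is counted with multiplicity at least $n$, so pointwise we have $\operatorname{ord}_z f^*E \geq n \cdot \min(1, \operatorname{ord}_z f^*E)$, which integrates to the inequality
$$N(r, f^*E) \geq n\, N_1(r, f^*E).$$

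First I would assume for contradiction that $f(\C) \not\subset E$, so that the Second Main Theorem applies. Plugging the above estimate into the conclusion of the Second Main Theorem, I would write
$$T_f(r, K_X) + T_f(r, E) \leq n N_1(r, f^*E) + S_f(r) \leq N(r, f^*E) + S_f(r) \|.$$
Next I would invoke the classical Nevanlinna inequality $N(r, f^*E) \leq T_f(r, E) + O(1)$ recorded above. Substituting this in and cancelling the common term $T_f(r, E)$ from both sides yields
$$T_f(r, K_X) \leq S_f(r) \|,$$
where $S_f(r) = O(\log^+ T_f(r)) + o(\log r)$.

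To finish, I would use that $X$ is of general type, so $K_X$ is big; in particular there is an ample line bundle $A$ and an effective divisor such that $K_X - A$ is effective (or a positive multiple of $K_X$ dominates $A$), giving $T_f(r, A) \leq T_f(r, K_X) + O(1)$ up to a positive multiple. Since $T_f(r) := T_f(r, A)$ grows at least like $\log r$ for any non-constant $f$ (and the error term $S_f(r)$ is $o(T_f(r))$), the bound $T_f(r, K_X) \leq S_f(r)$ forces $T_f(r)$ to be bounded by its own sublinear error term outside a set of finite measure, which is impossible for a non-constant entire curve. This contradiction shows $f(\C) \subset E$.

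The main obstacle I anticipate is the final comparison step: one must argue carefully that $T_f(r, K_X) \leq S_f(r)$ is genuinely contradictory for non-constant $f$. This requires that bigness of $K_X$ translate into a lower bound $T_f(r, K_X) \gtrsim T_f(r) + O(1)$ valid along the relevant sequence of radii, which is standard but uses that $f$ is non-constant and not contained in the stable base locus of $K_X$; since the conclusion we seek already places $f(\C)$ inside $E$, one may need to treat separately the (harmless) case where $f(\C)$ lies in the base locus, or simply note that the estimate $T_f(r,K_X)\leq S_f(r)\|$ already contradicts non-constancy because $S_f$ is a lower-order term relative to the growth of any non-constant curve's characteristic function. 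The manipulation of the truncated versus full counting functions is the conceptually essential but technically routine heart of the argument.
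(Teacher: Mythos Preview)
Your argument is correct and follows essentially the same route as the paper: assume $f(\C)\not\subset E$, use the ramification hypothesis to bound $nN_1(r,f^*E)\leq N(r,f^*E)$, combine with the Nevanlinna inequality $N(r,f^*E)\leq T_f(r,E)+O(1)$ and the Second Main Theorem to obtain $T_f(r,K_X)\leq S_f(r)\,\|$, and then derive a contradiction from the bigness of $K_X$. Your extra discussion of the base-locus subtlety in the final step is more careful than the paper's terse ``this gives a contradiction'', but the overall strategy is identical.
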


\begin{proof}
If $f(\C)$ is not contained in $E$ then $$nN_1(r, f^*E) \leq N(r,f^*E) \leq T_f(r, E) + O(1).$$
The Second Main Theorem then gives
$$T_f(r, K_X) \leq S_f(r) \|.$$
Since $K_X$ is supposed to be a big line bundle, this gives a contradiction.
\end{proof}

\begin{remark}
It is proved in \cite{Tsu85} that if $n>6$ then $X$ is of general type.
\end{remark}
\section{A metric approach}
In this section, we follow the approach initiated in \cite{Rou13},
constructing pseudo-metric on Hilbert modular varieties. The main difference
here is that we allow elliptic fixed points.

Let $g$ denote the Bergmann metric with K\"ahler form $\omega$ on ${\H}^n$ such that $Ricci(g)=-g$ and having holomorphic sectional curvature $\leq - 1/n$. It descends to a (singular) metric on ${\H}^n / \Gamma$. The main point of this section is to extend it to $\pi: X \to\overline{{\H}^n / \Gamma}$.
 
 The exceptional divisor $E$ splits as
 
     $$E=E_c+E_e$$
     
     where the first part corresponds to the resolution of cusps and the second one to the resolution of elliptic points.

\begin{proposition}\label{Poinc}
The metric $g$ has Poincar\'e growth near $E$.
\end{proposition}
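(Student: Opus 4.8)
The plan is to prove the Poincaré-growth statement by working locally on each piece of the exceptional divisor $E = E_c + E_e$ separately, since the two types of singularities have different resolution models (toroidal for the cusps, cyclic-quotient for the elliptic points). Recall that the Poincaré metric on the punctured disc $\Delta^* = \{0 < |u| < 1\}$ has the form $\frac{|du|^2}{|u|^2(\log|u|)^2}$, and a metric $g$ is said to have Poincaré growth near a normal-crossing divisor $\{u_1 \cdots u_t = 0\}$ if it is mutually bounded (above and below by positive constants) against the model product metric $\sum_{k=1}^t \frac{|du_k|^2}{|u_k|^2(\log|u_k|)^2} + \sum_{j>t} |du_j|^2$ in the standard local coordinates.

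First I would treat the cusps, which is the main case. Using the notation of section \ref{resolutionofcusps}, in local coordinates $u = (u_1,\dots,u_m)$ on a chart of the toroidal resolution, the isomorphism $\pi : X - F \simeq T$ is given by the $j$-th component $z_j = \sum_k \mu_k^{(j)} \log u_k$, where the matrix $(\mu_k^{(j)})$ is real, invertible, and entrywise positive. The Bergmann metric on $\H^m$ is the product $\sum_j \frac{|dz_j|^2}{(\operatorname{Im} z_j)^2}$ (up to normalization fixing the curvature). Pulling back under $\pi$, one has $dz_j = \sum_k \mu_k^{(j)} \frac{du_k}{u_k}$ and $\operatorname{Im} z_j = -\sum_k \mu_k^{(j)} \log|u_k|$. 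Since all $\mu_k^{(j)} > 0$ and, as one approaches $F$, each $\log|u_k| \to -\infty$, the quantities $\operatorname{Im} z_j$ are, up to multiplicative constants controlled by the matrix entries, comparable to $\sum_k (-\log|u_k|)$; substituting into the metric and using the invertibility of $(\mu_k^{(j)})$ to bound the pulled-back form both above and below by the model Poincaré metric $\sum_k \frac{|du_k|^2}{|u_k|^2(\log|u_k|)^2}$ gives the claim near $E_c$. The positivity and invertibility of the matrix are exactly what makes both bounds go through.

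For the elliptic part $E_e$, I would use the cyclic-quotient resolution of section 2.2.1. Here the Bergmann metric is already smooth (nonsingular) upstairs on $\H^m$ and merely acquires a quotient singularity after descending to $\C^m/G$; blowing up via the toric resolution $f : \tilde X \to X$ with transformation $z_k = \prod_l u_l^{b_{lk}}$ of (\ref{transform1}), the pullback $f^* g$ of the (locally bounded, genuinely Kähler) metric extends with at worst logarithmic growth along the exceptional components. Since a smooth metric pulled back under a birational morphism with normal-crossing exceptional divisor automatically satisfies Poincaré growth (being bounded above by a smooth metric and below by the degenerating product, the degeneration being milder than Poincaré), this case is routine once the coordinate expression (\ref{transform1}) is in hand.

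The main obstacle I anticipate is the cusp estimate: one must verify the \emph{lower} bound as carefully as the upper bound, i.e. that the pulled-back Bergmann metric does not degenerate faster than the model Poincaré metric in any direction. This requires controlling the interplay between the linear-algebra data $(\mu_k^{(j)})$ and the logarithmic divergences uniformly as one moves within a single chart and across the infinitely many glued copies $(\C)^m_\sigma$; the positivity of the $\mu$'s guarantees that no cancellation occurs in $\operatorname{Im} z_j$, and the invertibility guarantees the two-sided comparison, but making this uniform and independent of the chart $\sigma$ is the delicate point. Once both bounds are established on each chart and seen to be $V$-invariant, they descend to $Y = \tilde X / V$ and hence to a neighborhood of $E$ in $X$, completing the proof.
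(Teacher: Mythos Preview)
Your approach is correct in spirit but takes a completely different route from the paper's.  The paper's proof is a single sentence: since the Bergmann metric $g$ has holomorphic sectional curvature uniformly bounded above by $-1/n$, Royden's generalized Ahlfors--Schwarz lemma applies.  Concretely, for any holomorphic map $f:\Delta\to X\setminus E$ one gets $f^*g\leq C\,g_P$; applying this through the Kobayashi--Royden pseudometric and the distance--decreasing property of the inclusion $(\Delta^*)^t\times\Delta^{m-t}\hookrightarrow X\setminus E$ yields the Poincar\'e upper bound at once, with no reference to the explicit form of either resolution.  Your coordinate computations reproduce this upper bound by hand, which is more laborious but has the advantage of being completely explicit and of displaying the asymptotics that are used again in Propositions~\ref{cusp} and~\ref{ell}.

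Two points to tighten.  First, your definition of Poincar\'e growth asks for a two--sided comparison; the standard meaning (and the one the Royden argument establishes, and the only one needed later) is the \emph{upper} bound $g\leq C\,g_{\mathrm{model}}$.  Your own caveat about the delicate lower bound across infinitely many charts is therefore moot.  Second, your elliptic argument asserts that the descended metric is ``locally bounded'' and that a smooth metric pulled back through a resolution automatically has Poincar\'e growth.  Neither claim is quite right: already for $\C/(z\mapsto -z)$ the flat metric becomes $|dw|^2/4|w|$, which is unbounded, and on the resolution the pullback $\sum_i\prod_l|u_l|^{2b_{li}}\bigl|\sum_l b_{li}\,du_l/u_l\bigr|^2$ can still diverge.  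The honest fix is to observe that whenever the coefficient $b_{li}$ of $du_l/u_l$ is nonzero it is strictly positive, so each diagonal term is $O(|u_l|^{2b_{li}-2})$ with $b_{li}>0$, and $|u_l|^{2b_{li}}(\log|u_l|)^2\to 0$ gives the Poincar\'e bound.  With that correction your argument goes through.
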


\begin{proof}
$g$ has uniformly negative holomorphic sectional curvature, so one can invoke the generalized Ahlfors-Schwarz lemma of Royden \cite{Roy}.
\end{proof}

Let $F$ be a Hilbert modular form of weight $2l$ and $\omega=dz_1\wedge \dots \wedge dz_n.$
Then $s:=F\omega^{\otimes l}$ provides a section $s \in H^0(X\setminus E, K_X^{\otimes l}).$
Therefore one can consider its norm $||s||^2_{h^l}$ with respect to the metric induced by $g$, $h=(\det g)^{-1}.$
Now, we try to find conditions on $F$ under which $||s||^{2b/ln}.g$ will extend as a pseudo-metric on $X$ for some $b>0$ suitably chosen (see proposition \ref{controlKobayashi} below).

\subsection{The case of cusps}
Consider the Bergmann metric on ${\H}^n$:

$$g=2\sum_i \frac{dz_i\otimes d\overline{z_i}}{|\Im z_i|^2}$$ with volume form
$$\Omega=i^n\prod_i \frac{dz_i\wedge d\overline{z_i}}{|\Im z_i|^2}.$$

As already described above, the change of coordinates in the resolution of cusps is given by
$$z_i=\sum_j a_{ji} \log u_j.$$

So one immediately has that $s$ extends over $E_c$ as a pluricanonical form with logarithmic poles along $E_c$, in other words as a section of $l(K_X+E)$.

Near $E_c$, the volume form becomes
$$\Omega=i^n\det (a_{ji}) \prod_i \frac{du_i\wedge d\overline{u_i}}{|u_i|^2}.\frac{1}{|\sum a_{ji} \log |u_j|^2|^2}.$$

Therefore we see that  $$||s||^{2b/ln} \leq |F|^{2b/ln}C (\log |u_1\dots u_n|^2)^{2b}. $$ 

Denote $S_k^m$ the space of Hilbert modular form of weight $k$ and order at least $m$, where the order is the vanishing order at the cusps.

Using the above proposition \ref{Poinc}, one obtains.
\begin{proposition}\label{cusp}
Let $F \in S_{2l}^{\nu l}$ and $b>0$ then $||s||^{2b/ln}.g$ extends as a pseudo-metric over cusps vanishing on $E_c$ if $\nu >\frac{n}{b}$.
\end{proposition}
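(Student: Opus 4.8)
The plan is to make the estimate for $\|s\|^{2b/ln}$ derived just above explicit and show that it combines with the Poincar\'e growth of $g$ to give a pseudo-metric extending continuously (and vanishing) across $E_c$. First I would recall the two pieces of information already established. From the computation near $E_c$ we have the bound
$$\|s\|^{2b/ln} \leq |F|^{2b/ln} C \left(\log |u_1 \dots u_n|^2\right)^{2b},$$
and from Proposition \ref{Poinc} the metric $g$ has Poincar\'e growth near $E$, meaning that in the local coordinates $u$ it is comparable to the model Poincar\'e metric, whose coefficients blow up like $\frac{1}{|u_i|^2 (\log |u_i|^2)^2}$. The product $\|s\|^{2b/ln}.g$ is therefore controlled, coefficient by coefficient, by a quantity of the shape $|F|^{2b/ln} (\log |u_1 \dots u_n|^2)^{2b} \cdot \frac{1}{|u_i|^2 (\log |u_i|^2)^2}$, and the whole game is to show that the vanishing of $F$ at the cusp is strong enough to kill this logarithmic/polar blow-up.

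The key step is to exploit the hypothesis $F \in S^{\nu l}_{2l}$, i.e. that $F$ vanishes to order at least $\nu l$ at the cusp. In the coordinates $u$ the cusp corresponds to $u_1 \cdots u_n \to 0$, and the translation of ``vanishing order $\nu l$ at the cusp'' into an estimate on $|F|$ is that $|F|$ decays at least like a suitable power of $|u_1 \cdots u_n|$ (equivalently, a negative power of $\log\frac{1}{|u_i|}$ after accounting for the exponential change of variables $z_i = \sum_j a_{ji}\log u_j$, since $\Im z_i \sim -\sum_j a_{ji}\log|u_j|$ governs the exponential rate through the $q$-expansion). Feeding this decay into $|F|^{2b/ln}$ produces a factor that, for the bound to yield a pseudo-metric vanishing on $E_c$, must dominate the positive powers of $\log|u_1\cdots u_n|^2$ coming from both the explicit estimate on $\|s\|^{2b/ln}$ and the $(\log|u_i|^2)^{-2}$ Poincar\'e denominators. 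Counting exponents, the requirement becomes precisely an inequality of the form $\frac{\nu}{b} > n$ (matching the condition $\nu > \frac{n}{b}$), after normalizing by the weight $2l$ and the factor $ln$ in the exponent $2b/ln$; I would carry out this bookkeeping carefully to confirm the numerology.

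With the exponent inequality verified, I would conclude that each coefficient of $\|s\|^{2b/ln}.g$ extends continuously across $E_c$ and in fact tends to $0$ there, so that $\|s\|^{2b/ln}.g$ defines a bona fide pseudo-metric on a neighborhood of $E_c$ vanishing on $E_c$, as claimed. I expect the main obstacle to be the precise dictionary between the arithmetic notion of vanishing order $\nu l$ at the cusp and the analytic decay of $|F|$ in the toroidal coordinates $u$: one must use the Fourier--Jacobi (or $q$-) expansion of the Hilbert modular form adapted to the cone decomposition of Section \ref{resolutionofcusps}, and track how the totally positive elements $\mu_1,\dots,\mu_m$ entering the map $\pi$ interact with the lattice $N$ governing the expansion. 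Once this translation is pinned down, the remaining step is the elementary exponent count above, and the Poincar\'e growth supplied by Proposition \ref{Poinc} takes care of the metric factor uniformly.
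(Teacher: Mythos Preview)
Your approach is exactly the paper's: the authors' entire proof is the single sentence ``Using the above proposition \ref{Poinc}, one obtains,'' and what you have written is a faithful unpacking of that sentence---combine the displayed bound $\|s\|^{2b/ln}\le |F|^{2b/ln}C(\log|u_1\cdots u_n|^2)^{2b}$ with the Poincar\'e growth of $g$ from Proposition~\ref{Poinc}, and let the vanishing order of $F$ at the cusp kill the remaining blow-up. One small slip: you write that the exponent count yields ``$\tfrac{\nu}{b}>n$ (matching the condition $\nu>\tfrac{n}{b}$),'' but $\nu/b>n$ is $\nu>nb$, not $\nu>n/b$; when you actually do the bookkeeping you promise, the correct balance is that $|F|^{2b/ln}$ contributes roughly $|u_i|^{2b\nu/n}$ along each branch, which must beat the $|u_i|^{-2}$ from the Poincar\'e metric, giving $2b\nu/n>2$, i.e.\ $\nu>n/b$ as stated.
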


\subsection{The case of elliptic singularities}
At an elliptic fixed point  the stabilizer acts on the tangent space by multiplication by $e^{2i\pi S_i}$, $i=1,\dots,n,$ with
$S_i \in \Q$, $0\leq S_i <1$.

Consider the Bergmann metric on the polydisc ${\Delta}^n$:
$$g=2\sum_i \frac{dz_i\otimes d\overline{z_i}}{(1-|z_i|^2)^2}$$
with volume form
$$\Omega=i^n\prod_i \frac{dz_i\wedge d\overline{z_i}}{(1-|z_i|^2)^2}.$$
The change of coordinates in the resolution of elliptic singularities is given by
$$z_i=\prod_l u_l^{b_{li}}.$$
So near $E_e$, the volume form becomes
$$\Omega=i^n\det (b_{li}) \prod_l \frac{du_l\wedge d\overline{u_l}}{|u_l|^2}.|u_l|^{2\sum_i b_{li}}\prod_{i}\frac{1}{(1-\prod_l|u_l|^{2b_{li}})^2} $$

\begin{proposition}\label{ell}
Let $b>0$. There is a constant $c$ depending only on the order of the stabilizer of the elliptic fixed point such that
if $F$ is a Hilbert modular form of weight $2l$ vanishing with order $c.ln$ at elliptic fixed points then $||s||^{2b/ln}.g$  extends as a pseudo-metric over elliptic singularities vanishing on $E_e$.
\end{proposition}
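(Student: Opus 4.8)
\textbf{Proof strategy for Proposition \ref{ell}.}
The plan is to mirror the analysis of the cusp case (Proposition \ref{cusp}), but now tracking the extra contributions coming from the nontrivial weights $S_i$ of the stabilizer action at an elliptic fixed point. First I would estimate $\|s\|^{2b/ln}$ near $E_e$ explicitly in the toric coordinates $u_l$. Recall that $h=(\det g)^{-1}$, so $\|s\|^2_{h^l}=|F|^2\,(\det g)^{-l}$, and from the expression for $\Omega$ near $E_e$ one reads off $\det g$ up to a nonzero smooth factor as $\prod_l |u_l|^{-2}\cdot|u_l|^{2\sum_i b_{li}}\cdot\prod_i(1-\prod_l|u_l|^{2b_{li}})^{-2}$. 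Since the elliptic points lie in the interior of $\H^n$ (the factors $1-|z_i|^2$ stay bounded away from both $0$ and $\infty$ after the local biholomorphism to $\Delta^n$), the last product is a bounded nonvanishing factor and can be absorbed into constants. The essential singular behaviour is therefore governed by $\prod_l |u_l|^{2(\sum_i b_{li}-1)}$, so that $\|s\|^{2b/ln}$ behaves like $|F|^{2b/ln}\prod_l |u_l|^{(2b/ln)(\sum_i b_{li}-1)}$ up to a bounded factor.

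Next I would determine the vanishing order needed on $F$. The factor $g$ itself has a pole of the Poincar\'e type along $E_e$, contributing, in the $du_l/u_l$ basis, poles of order one in each $u_l$; this is exactly the content of Proposition \ref{Poinc} giving Poincar\'e growth near all of $E$. To compensate these poles and the possibly negative exponents $\sum_i b_{li}-1$, and to moreover force the resulting pseudo-metric to \emph{vanish} along $E_e$, the prefactor $|F|^{2b/ln}\prod_l|u_l|^{(2b/ln)(\sum_i b_{li}-1)}$ must decay fast enough. Writing $F$ in the coordinates $u_l$ via the substitution $z_i=\prod_l u_l^{b_{li}}$, a vanishing of $F$ to order $\kappa$ at the elliptic fixed point translates into a product of powers $\prod_l |u_l|^{(\,\text{something}\,)\kappa}$, where the exponents are controlled by the $b_{li}$ and by the order of the stabilizer. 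Matching exponents then yields a linear inequality: there is a constant $c$, depending only on the combinatorial data $(b_{li})$ and hence only on the order of the stabilizer, such that vanishing of $F$ to order $c\cdot ln$ guarantees every exponent of every $|u_l|$ in $\|s\|^{2b/ln}\cdot g$ is strictly positive, i.e. the pseudo-metric extends continuously and vanishes on $E_e$.

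The main obstacle I anticipate is making the constant $c$ genuinely \emph{uniform} and intrinsic, i.e. depending only on the order of the stabilizer and not on the particular toric resolution chosen. The exponents $b_{li}$ are rational numbers produced by the equivariant toric desingularization and a priori depend on the combinatorial cone decomposition; what must be shown is that the worst-case ratio controlling the required vanishing order is bounded in terms of $|G|$ alone. I would handle this by observing that the weights $S_i\in\Q\cap[0,1)$ of the linear action are determined by $G$, that the $b_{li}$ arise from a resolution of the associated cyclic quotient singularity $\C^n/G$ exactly as in Lemma \ref{tangencyorder} and the Fujiki construction, and that the denominators appearing are bounded by $|G|$; hence one may take, for instance, $c$ proportional to $\max_l \sum_i b_{li}$ normalized by the smallest positive exponent, a quantity controlled purely by $|G|$. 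A secondary technical point is to verify that the bounded, nonvanishing elliptic factor $\prod_i(1-\prod_l|u_l|^{2b_{li}})^{-2}$ truly stays away from zero on a neighbourhood of $E_e$; this is immediate because on $\tilde X$ near $F$ the coordinates $z_i$ remain inside $\Delta$, so $|z_i|<1$ uniformly. Once uniformity of $c$ is secured, the extension and vanishing of $\|s\|^{2b/ln}\cdot g$ along $E_e$ follow directly from the exponent comparison, completing the proof.
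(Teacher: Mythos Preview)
Your overall strategy coincides with the paper's: pass to the toric $u$--coordinates from the resolution, track the powers of $|u_l|$ in $\|s\|^{2b/ln}$ coming from the vanishing of $F$, and check that they dominate the Poincar\'e poles of $g$ along $E_e$. Two points, however, are not in order.

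First, your expression for $\|s\|^{2b/ln}$ mixes coordinate systems. You read off $\det g$ from $\Omega$ in the $u$--chart but keep $F$ as the coefficient of $(dz_1\wedge\dots\wedge dz_n)^{\otimes l}$; these cannot be combined as $|F|^2(\det g)_u^{-l}$. Since $\|s\|^2$ is a scalar, and in $z$--coordinates $(\det g)_z=\prod_i 2(1-|z_i|^2)^{-2}$ is smooth and nonvanishing at the fixed point, one simply has $\|s\|^2\sim |F(z(u))|^2$ up to a bounded nonvanishing factor; the extra factor $\prod_l|u_l|^{(2b/ln)(\sum_i b_{li}-1)}$ you obtained is spurious. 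The paper instead rewrites $s=G\,\bigl(\tfrac{du_1}{u_1}\wedge\dots\wedge\tfrac{du_n}{u_n}\bigr)^{\otimes l}$, computes $\ord_{u_j}G\geq (c+l)\sum_i b_{ji}$ when $F$ has order $\geq c$ in each $z_i$, and deduces $\|s\|^{2b/ln}\leq C\prod_l|u_l|^{2bc\sum_i b_{li}}$; the extension-and-vanishing condition then becomes $bc\sum_i b_{li}>1$ for every chart index $l$.

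Second, and this is the genuine gap, your argument for uniformity of the constant (``denominators bounded by $|G|$'') is only heuristic. The paper's key input here is Proposition~3.2 of \cite{Tai}: in every toric chart of the resolution of a cyclic quotient singularity with rotation numbers $S_1,\dots,S_n$, one has
\[
\sum_i b_{li}\ \geq\ m:=\min\Bigl(1,\ \sum_i S_i\Bigr).
\]
This turns the chart-by-chart condition $bc\sum_i b_{li}>1$ into the single requirement $c>1/(mb)$, a constant depending only on the rotation numbers $S_i$ (hence only on the stabilizer) and on $b$, and \emph{not} on the particular cone decomposition chosen. Without Tai's bound or an equivalent, the resolution-independence you flag as ``the main obstacle'' is not established.
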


\begin{proof}
By the change of coordinates, we have:

$$s=F(z_1,\dots,z_n)\omega^{\otimes l}=F_0(z_1,\dots,z_n)\frac{(dz_1\wedge \dots \wedge dz_n)^{\otimes l}}{(z_1\dots z_n)^l}$$
$$=G(u_1,\dots,u_n)\frac{(du_1\wedge \dots \wedge du_n)^{\otimes l}}{(u_1\dots u_n)^l}.$$
We have $$\ord_{u_j}G= (\ord_{z_1}F+l) . b_{j1}+\dots+(\ord_{z_n}F+l) . b_{jn}\geq (c+l)(b_{j1}+\dots b_{jn}),$$
where $c:= \min \ord_{z_i}F.$

So we obtain,
$$||s||^{2b/ln} \leq C \prod_l |u_l|^{2bc \sum b_{li}}. $$ 
Therefore $||s||^{2b/ln}.g$ extends as a pseudo-metric if $2bc \sum b_{li} >2$ for all $l$.
From \cite{Tai} (Proposition 3.2), one obtains that
$$\sum b_{li} \geq m:=\min (1, \sum  S_i).$$
Therefore one can choose $c>\frac{1}{mb}.$

\end{proof}

\begin{corollary}\label{elliptic}
If 
\begin{equation}
\sum S_i\geq 1
\end{equation}

then $||s||^{2b/ln}.g$ extends as a pseudo-metric over elliptic singularities (i.e. on $E_c$) for all Hilbert modular form vanishing with order $> ln/b$ at elliptic fixed points.
\end{corollary}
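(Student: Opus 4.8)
The plan is to obtain Corollary \ref{elliptic} as the special case $\sum_i S_i \geq 1$ of Proposition \ref{ell}; the whole argument is a matter of retracing that proof and observing how the hypothesis simplifies the threshold on the vanishing order. First I would recall the central estimate established there: in the resolution coordinates one has $\|s\|^{2b/ln}\leq C\prod_l |u_l|^{2bc\sum_i b_{li}}$ with $c=\min_i \ord_{z_i}F$, so that $\|s\|^{2b/ln}.g$ extends across $E_e$ as a pseudo-metric vanishing there precisely when $2bc\sum_i b_{li}>2$, i.e. $bc\sum_i b_{li}>1$, holds for every component of $E_e$.

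Next I would feed in the inequality $\sum_i b_{li}\geq m:=\min(1,\sum_i S_i)$ coming from \cite{Tai} (Proposition 3.2). The role of the hypothesis $\sum_i S_i\geq 1$ is exactly to force $m=1$; then, since $\sum_i b_{li}\geq m=1$ and $b,c>0$, the sufficient condition $bc\sum_i b_{li}>1$ follows from the single inequality $bc>1$, that is from $c=\min_i\ord_{z_i}F>1/b$. It remains only to rephrase this in terms of the order of vanishing of the modular form $F$ at the elliptic fixed points, normalized as in the cusp case (Proposition \ref{cusp}), where the membership $F\in S_{2l}^{\nu l}$ with $\nu>n/b$ already reads as order $>ln/b$; under the same normalization $c>1/b$ becomes ``$F$ vanishes with order $>ln/b$'', which is the assertion of the corollary.

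The step I expect to require the most care is this final bookkeeping: matching the purely local quantity $\min_i\ord_{z_i}F$ used inside the proof of Proposition \ref{ell} with the invariantly defined order of vanishing at the elliptic fixed points that appears in the hypothesis, while correctly tracking the weight $2l$ and the dimension $n$ that together produce the factor $ln$. I would also verify that the rotation numbers $S_i$ entering $m=\min(1,\sum_i S_i)$ are indeed those of the stabilizer action described above, check that the inequality stays strict so that the extended pseudo-metric genuinely vanishes on $E_e$, and note in passing that the ``$E_c$'' written in the statement should read $E_e$.
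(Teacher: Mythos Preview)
Your proposal is correct and follows exactly the paper's (implicit) argument: the corollary is stated without proof because it is the specialization of the proof of Proposition~\ref{ell} to the case $m=\min(1,\sum_i S_i)=1$, which turns the threshold $c>1/(mb)$ into $c>1/b$ and hence, in the paper's normalization of ``order'' used already in the statement of Proposition~\ref{ell}, into order $>ln/b$. Your caution about the bookkeeping between $\min_i\ord_{z_i}F$ and the stated ``order $>ln/b$'' is well placed (the paper's own notation overloads $c$ here), and you are right that the ``$E_c$'' in the statement is a typo for $E_e$.
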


\subsection{Control of the Kobayashi pseudo-distance}
Let $F$ be a Hilbert modular form and $0 < \epsilon <1/n$ such that $||s||^{2(1-n\epsilon)/ln}.g$ extends as a pseudo-metric on $X$ vanishing on $E$.

\begin{proposition}\label{controlKobayashi}
There exists a constant $\beta>0$ such that $$\tilde{g}:=\beta.||s||^{2(1-n\epsilon)/ln}.g$$ satisfies the following property: for any holomorphic map $f: \Delta \to {X}$ from the unit disc equipped with the Poincar\'e metric $g_P$, we have
$$f^*\tilde{g}\leq g_P.$$
\end{proposition}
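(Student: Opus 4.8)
The plan is to exhibit a constant $\beta>0$ that is uniform and then verify the Ahlfors-Schwarz-type comparison $f^*\tilde g \leq g_P$ by reducing it to a curvature estimate for $\tilde g$. The guiding principle is Royden's generalized Ahlfors-Schwarz lemma: if a (pseudo-)metric on the target has holomorphic sectional curvature bounded above by a negative constant, then any holomorphic map from the Poincar\'e disc is distance-decreasing up to normalization. So the strategy is to compute, or rather estimate, the holomorphic sectional curvature of $\tilde g$ and show it stays below a fixed negative bound on the locus where $\tilde g$ is a genuine metric, i.e. on $X \setminus E$.

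First I would record the curvature behaviour of the two factors. The Bergmann metric $g$ has holomorphic sectional curvature $\leq -1/n$ by construction, so $g$ alone already satisfies a good Ahlfors-Schwarz bound. Multiplying $g$ by the nonnegative weight $||s||^{2(1-n\epsilon)/ln}$ can only \emph{decrease} curvature where the weight is plurisubharmonic, which is the key point: the factor $||s||^{2(1-n\epsilon)/ln}$ is (a positive power of) the norm of a holomorphic section, hence its logarithm is plurisubharmonic, so $i\partial\bar\partial \log ||s||^{2(1-n\epsilon)/ln} \geq 0$ away from the zero locus of $s$. Consequently the Ricci/curvature form of the product pseudo-metric is bounded above by that of $g$ itself, and the holomorphic sectional curvature of $\tilde g$ inherits a negative upper bound away from $E$ (the vanishing of the weight on $E$ is exactly what makes $\tilde g$ degenerate there, but the comparison inequality is insensitive to this because any $f$ meeting $E$ can be handled by continuity). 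I would make this precise using the standard formula relating the holomorphic sectional curvature of a conformal rescaling $e^{2\varphi} g$ to that of $g$ plus a Laplacian-of-$\varphi$ correction term, and observe that the correction has the favorable sign.

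Next I would carry out the actual Ahlfors-Schwarz comparison. Given $f:\Delta \to X$, I pull back $\tilde g$ to the disc and compare $f^*\tilde g$ with the Poincar\'e metric $g_P$, which has constant curvature $-1$ (normalized appropriately). The maximum principle argument runs as usual: consider the ratio $u = f^*\tilde g / g_P$ on $\Delta$, assume it attains an interior maximum (after the standard exhaustion by slightly smaller discs and a limiting argument to handle the boundary), and at that maximum point compare the curvature inequalities. The curvature bound from the previous step forces the maximal value of $u$ to be controlled by a constant depending only on the two curvature bounds, and absorbing that constant into $\beta$ yields $f^*\tilde g \leq g_P$. The role of $\epsilon$ is precisely to leave a definite gap in the curvature bound so that the negative constant is strictly negative after the plurisubharmonic correction, ensuring Royden's lemma applies with a usable constant.

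The main obstacle I expect is justifying the curvature computation near $E$ and confirming that $\beta$ can be chosen \emph{uniformly}, independent of $f$. Away from $E$, $\tilde g$ is smooth and the estimate is classical, but near $E$ the weight $||s||^{2(1-n\epsilon)/ln}$ vanishes and $g$ has Poincar\'e growth (by Proposition \ref{Poinc}); one must check that the pseudo-metric $\tilde g$ extends across $E$ with the correct upper curvature bound rather than developing a bad singularity. This is where Propositions \ref{cusp} and \ref{ell} and Corollary \ref{elliptic} enter: they guarantee that, under the chosen vanishing order of $F$, the combination $||s||^{2(1-n\epsilon)/ln}.g$ extends as a genuine pseudo-metric vanishing on $E$, so the curvature form does not blow up. The technical care lies in treating the two types of boundary components $E_c$ and $E_e$ separately using the explicit local coordinate expressions for $\Omega$ already derived, verifying in each chart that the plurisubharmonic correction term dominates any contribution that could spoil the negative curvature bound, and that the resulting $\beta$ depends only on the fixed curvature constants and not on the particular map $f$.
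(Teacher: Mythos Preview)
Your overall architecture (pull back to the disc, run a maximum-principle/Ahlfors--Schwarz argument, absorb constants into $\beta$) matches the paper's proof. But the key curvature computation is stated with the wrong sign, and this is not a cosmetic issue: it is exactly the place where the parameter $\epsilon$ does real work.

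You claim that $\log\|s\|^{2(1-n\epsilon)/ln}$ is plurisubharmonic because $s$ is holomorphic, so that the conformal weight ``can only decrease curvature.'' This would be true for the \emph{pointwise modulus} $|s|^2$ in a local trivialization, but $\|s\|^2$ is the norm with respect to $h^l=(\det g)^{-l}$, and the curvature of $h^l$ enters. Since $g$ is K\"ahler--Einstein with $\mathrm{Ricci}(g)=-g$, the induced metric on $K_X$ has positive curvature $\omega_g$, and away from $\{s=0\}$ one has
\[
i\partial\bar\partial\log\|s\|^{2(1-n\epsilon)/ln}
= \tfrac{1-n\epsilon}{ln}\,\bigl(i\partial\bar\partial\log|s|^2 - l\,\omega_g\bigr)
= -\bigl(\tfrac{1}{n}-\epsilon\bigr)\,\omega_g,
\]
which is \emph{negative}. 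Thus the weight makes the curvature of $\tilde g$ \emph{less} negative, not more. The argument survives only because the holomorphic sectional curvature of $g$ contributes $+\tfrac{1}{n}f^*g$ to $i\partial\bar\partial\log f^*g$, and $\tfrac{1}{n}-(\tfrac{1}{n}-\epsilon)=\epsilon>0$ leaves a strictly positive residue; this is precisely why the exponent is $1-n\epsilon$ rather than $1$. Your sentence about ``the role of $\epsilon$'' hints that you sensed this, but it is inconsistent with your earlier claim that the correction has the favorable sign; if the weight genuinely helped, no $\epsilon$-gap would be needed.

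Once the sign is corrected, the rest of your outline coincides with the paper: at an interior maximum $z_0$ of $u(z)=(1-|z|^2)^2 f^*\tilde g$ one gets $\epsilon\, f^*g(z_0)(1-|z_0|^2)^2\leq 2$, and then $\beta$ is chosen as $\dfrac{\epsilon/2}{\sup_X\|s\|^{2(1-n\epsilon)/ln}}$, which is finite because $\|s\|$ is continuous on the compact manifold $X$ (this is where the extension of the pseudo-metric across $E$ is used). Note also that the boundedness of $\|s\|$, not any curvature estimate near $E$, is what makes $\beta$ uniform; your final paragraph overcomplicates this point.
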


\begin{proof}
Let $$u(z):=(1-|z|^2)^2.f^*\tilde{g}.$$ Restricting to a smaller disk, we may assume that $u$ vanishes on the boundary of $\Delta$. Take a point $z_0$ at which $u$ is maximum. $z_0 \in \Delta \setminus f^{-1}(\{s=0\})$ otherwise $u$ would vanish identically. Therefore at $z_0$ we have
$$i\partial \overline{\partial} \log u(z)_{z=z_0} \leq 0.$$
We have
$$i\partial \overline{\partial} \log u(z)=i\partial \overline{\partial} \log f^*||s||^{2(1-n\epsilon)/ln}+i\partial \overline{\partial} \log f^*g+i\partial \overline{\partial} \log (1-|z|^2)^2.$$
On $X$, we have
$$i\partial \overline{\partial} \log ||s||^{2(1-n\epsilon)/ln} = (\frac{1}{n}-\epsilon) Ricci(g)=-(\frac{1}{n} - \epsilon).g$$
by the K\"ahler-Einstein property.

$g$ has holomorphic sectional curvature $\leq -\frac{1}{n}$ therefore
$$i\partial \overline{\partial} \log f^*g \geq \frac{1}{n}.f^*g.$$
Moreover
$$i\partial \overline{\partial} \log (1-|z|^2)^2=\frac{-2}{(1-|z|^2)^2}.$$
 Combining the above inequalities, we obtain
$$ -(\frac{1}{n}-\epsilon).f^*g(z_0)+\frac{1}{n}.f^*g(z_0)-\frac{2}{(1-|z_0|^2)^2}\leq 0.$$
So
$$\epsilon/2.f^*g(z_0).(1-|z_0|^2)^2 \leq 1.$$

Finally, we have
$$u(z)\leq u(z_0)=(1-|z_0|^2)^2.\beta.||s(f(z_0))||^{2(1-n\epsilon)/ln}.f^*g(z_0)$$ $$\leq \frac{2\beta.||s(f(z_0))||^{2(1-n\epsilon)/ln}}{\epsilon}.$$
Therefore if we take
$$\beta=\frac{\epsilon/2}{\sup ||s(f(z))||^{2(1-n\epsilon)/ln}},$$
we obtain the desired property.
\end{proof}

\begin{corollary}
Let $d_X$ be the Kobayashi pseudo-distance, $\beta$ and $F$ a Hilbert modular form as above. Then $\tilde{g} \leq d_X$.
In particular, the degeneracy locus of $d_X$ is contained in the base locus of these Hilbert modular forms.
\end{corollary}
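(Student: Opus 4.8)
The plan is to read Proposition \ref{controlKobayashi} as an infinitesimal domination of $\tilde{g}$ by the Kobayashi--Royden pseudo-metric, and then to integrate. Recall that the Kobayashi--Royden infinitesimal pseudo-metric $k_X$ admits the extremal characterization as the \emph{largest} pseudo-metric on $X$ for which every holomorphic map $f:\Delta\to X$ from the unit disc with its Poincar\'e metric $g_P$ is distance non-increasing, i.e. $f^*k_X\leq g_P$. Proposition \ref{controlKobayashi} provides precisely this distance-decreasing property for $\tilde{g}$, namely $f^*\tilde{g}\leq g_P$ for every holomorphic $f:\Delta\to X$. The first step is therefore to invoke this maximality to obtain the pointwise inequality $\tilde{g}\leq k_X$ on $T_X$.

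The second step is to pass to the integrated length pseudo-distances. By Royden's theorem \cite{Roy} the Kobayashi pseudo-distance $d_X$ coincides with the length pseudo-distance associated to $k_X$; integrating the inequality $\tilde{g}\leq k_X$ along paths then yields $d_{\tilde{g}}\leq d_X$, which is the meaning of the inequality $\tilde{g}\leq d_X$. Here one must take some care because $\tilde{g}$ is only a continuous, possibly degenerate pseudo-metric and not a smooth Hermitian metric; however, the comparison of induced length structures uses only the pointwise bound together with the continuity supplied by Propositions \ref{cusp} and \ref{ell}, once one has matched the normalization of $g_P$ appearing in Proposition \ref{controlKobayashi} with that of the Kobayashi--Royden metric of the disc. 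This regularity bookkeeping is the only genuinely delicate point.

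For the final assertion on the degeneracy locus, the last step is to identify where $\tilde{g}=\beta\,||s||^{2(1-n\epsilon)/ln}.g$ is a genuine metric. Since $g$ is nondegenerate on the regular part and $s=F\omega^{\otimes l}$ is nonvanishing exactly off $E\cup\{F=0\}$, the pseudo-metric $\tilde{g}$ is positive definite on the complement of the base locus of the chosen Hilbert modular forms, so $d_{\tilde{g}}$ separates any two points lying there. Combined with $d_{\tilde{g}}\leq d_X$, this shows that $d_X$ also separates such pairs, whence any pair of distinct points not separated by $d_X$ must already lie in the base locus. Therefore the degeneracy locus of $d_X$ is contained in the base locus of these Hilbert modular forms, completing the proof.
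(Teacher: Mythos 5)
Your argument is correct and is exactly the intended deduction: the paper states this corollary without proof, relying on the standard extremal characterization of the Kobayashi--Royden metric (every pseudo-metric made distance-decreasing by all holomorphic discs is dominated by $k_X$), Royden's identification of $d_X$ with the integrated form of $k_X$, and the observation that $\tilde{g}$ is nondegenerate off $E\cup\{F=0\}$. Your normalization and regularity remarks are appropriate but do not change the substance.
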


\begin{corollary}\label{SGG}
Let $X$ be a Hilbert modular variety such that there exists a Hilbert modular form as above. Then $X$ satisfies the strong Green-Griffiths-Lang conjecture.
\end{corollary}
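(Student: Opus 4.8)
The plan is to deduce the corollary directly from the pseudo-metric estimate of Proposition \ref{controlKobayashi} together with a standard Schwarz--Ahlfors argument on expanding discs. By hypothesis we are given a Hilbert modular form $F$ (and the associated section $s = F\omega^{\otimes l}$) and a constant $0 < \epsilon < 1/n$ for which the pseudo-metric $\tilde{g} = \beta \|s\|^{2(1-n\epsilon)/ln} g$ is distance-decreasing for every holomorphic map $(\Delta, g_P) \to X$. By the extremal characterization of the infinitesimal Kobayashi--Royden metric, this is exactly the statement $\tilde{g} \leq d_X$ recorded in the preceding corollary, so that work is already done.

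First I would pin down the degeneracy locus of $\tilde{g}$. Since $g$ is a genuine K\"ahler metric on the regular part of ${\H}^n/\Gamma$, the pseudo-metric $\tilde{g}$ degenerates precisely along $E$ together with the zero divisor of $s$. By the constructions of the previous subsections this locus is $Z := E \cup \{s = 0\}$, and since $s$ extends as a section of a power of $K_X + E$, GAGA makes $Z$ a genuine algebraic subvariety of the projective manifold $X$. Because $F$ is a non-zero Hilbert modular form, $\{s=0\}$ is a proper divisor, and $E$ is the proper exceptional divisor; hence $Z$ is a proper algebraic subvariety, contained in the union of $E$ with the base locus of the available forms.

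Next I would run the entire-curve argument. Given a non-constant $f: \C \to X$, I restrict it to the disc $\Delta_R$ of radius $R$ with its Poincar\'e metric $g_{P,R}$. Applying Proposition \ref{controlKobayashi} (after rescaling the disc to unit radius) yields $f^*\tilde{g} \leq g_{P,R}$ on $\Delta_R$. Since $g_{P,R}$ tends to $0$ pointwise as $R \to \infty$ (equivalently, the Kobayashi--Royden metric of $\C$ vanishes identically), I conclude $f^*\tilde{g} \equiv 0$. Thus $f'(z)$ lies in the kernel of $\tilde{g}$ for every $z$, which forces $f(\C) \subset Z$. This furnishes the proper subvariety required by the strong Green--Griffiths--Lang conjecture.

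The only point requiring genuine care is the passage from the infinitesimal estimate to the limiting statement $f^*\tilde{g}\equiv 0$, together with the verification that $Z$ is both \emph{algebraic} and \emph{proper}; the latter is guaranteed precisely by the standing hypothesis that a Hilbert modular form ``as above'' exists, i.e. one whose vanishing order at the cusps and at the elliptic fixed points is large enough for $\tilde g$ to extend (Propositions \ref{cusp} and \ref{ell}), while still being a non-zero form so that $\{s=0\}$ does not exhaust $X$. With these in hand the argument closes, and $X$ satisfies the strong Green--Griffiths--Lang conjecture with exceptional set $Z$.
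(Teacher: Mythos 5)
Your argument is correct and is essentially the one the paper intends: the paper leaves Corollary \ref{SGG} as an immediate consequence of the preceding corollary ($\tilde g \le d_X$, hence the degeneracy locus of $d_X$ lies in the base locus of the modular forms together with $E$), and your expanding-disc argument showing $f^*\tilde g \equiv 0$ is just the standard unwinding of that statement for entire curves. Your additional care in identifying the degeneracy locus $Z = E \cup \{s=0\}$ as a proper algebraic subvariety is exactly the point the paper relies on implicitly.
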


\subsection{Existence of Hilbert modular forms}
We shall use the following result of \cite{Tsu85} (Sect. 4)
\begin{equation}\label{RR}
\dim S_k^{\nu k}(\Gamma_K) \geq (2^{-2n+1}\pi^{-2n} d_K^{3/2} \zeta_K(2)-2^{n-1}\nu^nn^{-n}d_K^{1/2}hR)k^n+O(k^{n-1})
\end{equation}
for even $k \geq 0$, where $h, d_K, R, \zeta_K$ denote the class number of $K$, the absolute value of the discriminant, the positive regulator and the zeta function of $K$.
In particular, there is a modular form $F$ with $\ord (f)/\weight(f) \geq \nu$, if
\begin{equation}\label{weight}
\nu < 2^{-3}\pi^{-2}n\left( \frac{4d_K\zeta_K(2)}{hR}\right)^{1/n}.
\end{equation}

\begin{corollary}
 For $n$ fixed, except for a finite number of $K$, there is a Hilbert modular form $F$ such that  $||s||^{2(1-n\epsilon)/ln}.g$ extends as a pseudo-metric over cusps. 
\end{corollary}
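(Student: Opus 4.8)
The plan is to reduce the statement, via Proposition~\ref{cusp}, to an asymptotic lower bound on the quantity that governs the existence of Hilbert modular forms in \eqref{weight}, and then to settle that bound by a standard analytic estimate on $hR$.

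First I would fix $n$ and abbreviate
$$B_K := 2^{-3}\pi^{-2}n\left(\frac{4d_K\zeta_K(2)}{hR}\right)^{1/n},$$
so that by \eqref{RR}--\eqref{weight} the field $K$ admits a Hilbert modular form with $\ord(F)/\weight(F)\geq\mu$ whenever $\mu<B_K$. By Proposition~\ref{cusp}, the pseudo-metric $\|s\|^{2(1-n\epsilon)/ln}.g$ extends over the cusps as soon as one produces an element of $S_{2l}^{\nu l}$ with $\nu>n/(1-n\epsilon)$ for some $\epsilon\in(0,1/n)$. A nonzero element of $S_{2l}^{\nu l}$ has weight $2l$ and vanishing order $\geq\nu l$, i.e.\ order-to-weight ratio $\geq\nu/2$, so by \eqref{weight} such an element exists (for $l$ large) provided $\nu/2<B_K$. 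The two requirements are simultaneously satisfiable precisely when $n/(1-n\epsilon)<2B_K$; since $n/(1-n\epsilon)\to n$ as $\epsilon\to0^+$, it suffices to arrange $B_K>n/2$ and then to choose $\epsilon$ small, $\nu$ in the resulting open interval, and $l$ large. Thus the entire statement reduces to showing that $B_K>n/2$ for all but finitely many totally real fields $K$ of degree $n$.

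Next I would prove that in fact $B_K\to\infty$ as $d_K\to\infty$ within the fixed-degree family. Every Euler factor of $\zeta_K(2)$ exceeds $1$, so $\zeta_K(2)\geq1$ and it is enough to show $d_K/(hR)\to\infty$. The analytic class number formula gives, for totally real $K$,
$$hR=\frac{\sqrt{d_K}}{2^{n-1}}\,\operatorname{Res}_{s=1}\zeta_K(s),$$
and the classical effective upper bound $\operatorname{Res}_{s=1}\zeta_K(s)\ll_n(\log d_K)^{n-1}$ then yields
$$\frac{d_K}{hR}\gg_n\frac{\sqrt{d_K}}{(\log d_K)^{n-1}}\longrightarrow\infty.$$
One could instead invoke the Brauer--Siegel theorem, which for fixed degree gives $hR=d_K^{1/2+o(1)}$; but since only an upper bound on $hR$ is needed, the effective residue bound suffices and the ineffective lower bound of Brauer--Siegel can be avoided.

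Finally, the inequality $B_K\leq n/2$ forces $d_K$ to lie below some bound $X_0=X_0(n)$, and by the Hermite--Minkowski theorem only finitely many number fields of degree $n$ have $d_K\leq X_0$. Hence $B_K>n/2$ for all but finitely many $K$, which together with the reduction established at the outset completes the proof. I expect the main obstacle to be the arithmetic input above, namely controlling $hR$ from above in terms of $d_K$; everything else is bookkeeping with the explicit constants in \eqref{weight} and Proposition~\ref{cusp}.
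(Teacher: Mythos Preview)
Your argument is correct and follows essentially the same route as the paper: reduce via Proposition~\ref{cusp} and \eqref{weight} to showing that $B_K\to\infty$ for fixed $n$, then use $\zeta_K(2)\geq 1$ together with $hR\approx d_K^{1/2}$ and Hermite--Minkowski. The paper simply invokes Brauer--Siegel for the growth of $hR$, whereas you supply the sharper observation that only the \emph{upper} bound on $hR$ is needed and that this follows effectively from the class number formula plus the classical bound on $\operatorname{Res}_{s=1}\zeta_K$; this is a small but genuine improvement in that it avoids the ineffectivity of Brauer--Siegel.
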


\begin{proof}
If we fix $n$, then $\zeta_K(2)$ has a positive lower bound independent of $K$. Since $hR \sim d_K^{1/2}$ by the Brauer-Siegel Theorem, for any constant $C$ there are only a finite number of $K$ such that the right hand side of (\ref{weight}) is smaller than $C$. One concludes with proposition \ref{cusp}.
\end{proof}

Moreover as $d_K$ tends to infinity, the coefficient of the leading term of (\ref{RR})  grows at least with order $O( d_K^{3/2}).$

\begin{corollary}
If the number of elliptic fixed points is $O(d_K^{\epsilon})$ for $0<\epsilon<3/2$, then with finite exceptions, Hilbert modular varieties of dimension $n$ satisfy the strong Green-Griffiths-Lang conjecture.
\end{corollary}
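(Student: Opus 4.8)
The plan is to reduce, via Corollary \ref{SGG}, the statement to the construction of a single Hilbert modular form whose associated section extends the Bergmann pseudo-metric across the whole exceptional divisor $E=E_c+E_e$. Fix once and for all a small $\delta\in(0,1/n)$ and set $b=1-n\delta>0$. By Proposition \ref{cusp}, a form $F\in S_{2l}^{\nu l}(\Gamma_K)$ with $\nu>n/b$ makes $\|s\|^{2b/ln}.g$ extend as a pseudo-metric over the cusps; by Proposition \ref{ell}, if in addition $F$ vanishes to order at least $c\cdot ln$ at every elliptic fixed point, the pseudo-metric also extends over the elliptic singularities, where $c$ is controlled by the order of the elliptic stabilizer. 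For $SL_2(\mathcal O)$ with $K$ totally real of degree $n$ these torsion orders satisfy $\phi(N)\leq 2n$ and are thus bounded in terms of $n$ alone, so $c$ may be taken to be a constant $c_0=c_0(n)$ uniform in $K$. It therefore suffices to exhibit, for all but finitely many $K$, a nonzero element of $S_{2l}^{\nu l}(\Gamma_K)$ vanishing to order $\geq c_0\,ln$ at each elliptic point.

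The dimension count is the heart of the argument. Fixing $\nu>n/b$ and $n$, estimate (\ref{RR}) shows that the leading coefficient (in $l$) of $\dim S_{2l}^{\nu l}(\Gamma_K)$ is bounded below by $2^{-2n+1}\pi^{-2n}d_K^{3/2}\zeta_K(2)-2^{n-1}\nu^n n^{-n}d_K^{1/2}hR$. For fixed $n$ the value $\zeta_K(2)$ is bounded below by a positive constant, while the Brauer-Siegel Theorem gives $hR\sim d_K^{1/2}$; hence the second term is $O(d_K)$ and the first is of exact order $d_K^{3/2}$. Consequently there are constants $c_1,c_2>0$ depending only on $n$ and $\nu$ with
$$\dim S_{2l}^{\nu l}(\Gamma_K)\geq\bigl(c_1 d_K^{3/2}-c_2 d_K\bigr)l^n+O(l^{n-1}),$$
so that $\dim S_{2l}^{\nu l}(\Gamma_K)\geq c_3\,d_K^{3/2}\,l^n$ for $d_K$ and $l$ large.

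Next I would count the linear conditions cut out by the prescribed vanishing. Imposing vanishing to order $c_0\,ln$ at a single point of the $n$-dimensional variety amounts to killing all partial derivatives of order $<c_0 ln$, which is at most $\binom{c_0 ln+n-1}{n}=O(l^n)$ conditions, the implied constant depending only on $n$. With the number $N_e$ of elliptic fixed points bounded by $O(d_K^{\epsilon})$, the total number of conditions is $O(d_K^{\epsilon}\,l^n)$. Since $\epsilon<3/2$, comparing with the lower bound of the previous paragraph yields, for $l$ large enough to absorb the $O(l^{n-1})$ error,
$$c_3\,d_K^{3/2}\,l^n>C\,d_K^{\epsilon}\,l^n\qquad\text{as soon as}\qquad d_K^{3/2-\epsilon}>C/c_3.$$
Because $3/2-\epsilon>0$, this holds for all $d_K$ above an explicit threshold, that is, for all but finitely many $K$ (finitely many by Hermite-Minkowski, as $n$ is fixed). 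For such $K$ the space of forms strictly exceeds the number of conditions, producing the desired nonzero $F$, and Corollary \ref{SGG} then gives the strong Green-Griffiths-Lang conjecture.

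The main obstacle I anticipate is twofold: first, securing the uniformity of the constant $c_0$ governing the required elliptic vanishing order, which rests on the boundedness of torsion orders in $SL_2(\mathcal O)$ for totally real $K$ of degree $n$ and hence on a lower bound for the quantity $m=\min(1,\sum S_i)$ appearing in Proposition \ref{ell}; and second, the clean matching of growth rates, where one must ensure that the $l^n$ contributions from the dimension and from the vanishing conditions genuinely cancel, so that the comparison reduces to $d_K^{3/2}$ against $d_K^{\epsilon}$. Once these points are in place, the remaining estimates are routine.
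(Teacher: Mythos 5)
Your proposal is correct and follows essentially the same route as the paper: the paper's own (one-sentence) proof implicitly relies on exactly the comparison you make between the $d_K^{3/2}l^n$ growth of $\dim S_{2l}^{\nu l}(\Gamma_K)$ from estimate (\ref{RR}) and the $O(d_K^{\epsilon}l^n)$ linear conditions imposed by the elliptic vanishing, followed by an appeal to Corollary \ref{SGG}. You in fact supply two details the paper leaves tacit --- the uniformity in $K$ of the constant $c_0$ (via the boundedness of torsion orders in $SL_2(\mathcal O)$ for fixed $n$) and the explicit count $\binom{c_0 ln+n-1}{n}$ of conditions per point --- both of which are needed and correct.
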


\begin{proof}
If $d_K$ is large enough, one can find modular forms satisfying the hypotheses of proposition \ref{cusp} and \ref{ell}. One can therefore apply corollary \ref{SGG}
\end{proof}

\subsection{Estimation of elliptic fixed points}
Prestel \cite{Prestel} has obtained precise formula on the number of elliptic points of the Hilbert modular group.
In particular, one can deduce

\begin{proposition}
For fixed $n$, the number of equivalence classes of elliptic fixed points is $O(d_K^{\frac{1}{2}+\epsilon})$ for every $\epsilon>0$.
\end{proposition}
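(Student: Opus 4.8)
The plan is to reduce this geometric count to an arithmetic one and then estimate the resulting class numbers analytically. First I would classify the elliptic elements. An elliptic fixed point of $\Gamma_K=SL_2(\O)$ on $\H^n$ has a finite stabilizer (the action is properly discontinuous), which is moreover cyclic; its generator is a torsion element $\gamma$ of some order $m>1$. The eigenvalues of $\gamma$ are conjugate roots of unity $\zeta_m,\zeta_m^{-1}$, so $\operatorname{tr}\gamma=\zeta_m+\zeta_m^{-1}$ lies in $\O$, and hence the totally real field $\Q(\zeta_m+\zeta_m^{-1})$, of degree $\varphi(m)/2$, embeds into $K$. In particular $\varphi(m)/2\le n$, which leaves only \emph{finitely many} admissible orders $m$ once $n$ is fixed; for each such $m$ the field $L:=K(\zeta_m)$ is a CM field of degree $2n$ whose maximal totally real subfield is $K$, so $[L:K]=2$.

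Next I would invoke Prestel's formulas \cite{Prestel}. The $\Gamma_K$-equivalence classes of elliptic points of a given order $m$ correspond to conjugacy classes of optimal embeddings of (an order of) $L$ into $M_2(\O)$, and by the theory of optimal embeddings these are counted, up to local factors supported on the finitely many primes ramifying in $L/K$ and a unit index, by the \emph{relative class number} $h_L^-:=h_L/h_K$. For $n$ and $m$ fixed the local embedding numbers and the Hasse unit index $Q\in\{1,2\}$ are bounded independently of $K$, so the number of elliptic points of order $m$ is $O(h_L^-)$ with implied constant depending only on $n$.

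The core estimate is then an analytic bound on $h_L^-$. Factoring the Dedekind zeta function as $\zeta_L(s)=\zeta_K(s)\,L(s,\chi_{L/K})$ for the quadratic Hecke character $\chi_{L/K}$ cutting out $L$, and comparing residues at $s=1$ via the analytic class number formula (using $r_1(L)=0,\ r_2(L)=n$ and the regulator relation $R_L=2^{\,n-1}R_K/Q$), one gets
$$ h_L^-=\frac{w_L\,Q}{(2\pi)^n}\sqrt{\frac{d_L}{d_K}}\;L(1,\chi_{L/K}). $$
By the conductor--discriminant formula $d_L=d_K^{2}\,N_{K/\Q}(\mathfrak d_{L/K})$, so $d_L/d_K=d_K\,N(\mathfrak d_{L/K})=O(d_K)$ for fixed $m$ (the relative different divides $(m)$, so its norm is at most $m^n$), while $w_L=O(1)$ since $\varphi(w_L)\le 2n$. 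The standard convexity bound gives $L(1,\chi_{L/K})=O(\log d_L)=O(d_K^{\epsilon})$. Hence $h_L^-=O(d_K^{1/2+\epsilon})$, and summing the $O(d_K^{1/2+\epsilon})$ contributions over the finitely many admissible $m$ and the finitely many embeddings $\Q(\zeta_m+\zeta_m^{-1})\hookrightarrow K$ preserves the bound and yields the claim.

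The main obstacle is not the analytic estimate, which is routine, but \emph{matching the geometric count to $h_L^-$} rather than to $h_L$ itself: only the relative class number produces $d_K^{1/2+\epsilon}$, whereas $h_L=h_K h_L^-$ could be as large as $d_K^{1+\epsilon}$ when the regulator $R_K$ is small. One must therefore verify, following Prestel, that the optimal-embedding/mass computation genuinely outputs $h_L/h_K$ and that all auxiliary local factors and the unit index remain bounded as $K$ varies with $n$ fixed. This is precisely the content of Prestel's explicit formulas, which is why I would rely on them rather than re-derive the count from scratch.
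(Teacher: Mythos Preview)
Your argument is correct and follows the same overall strategy as the paper: reduce to Prestel's explicit formula, observe that only finitely many ``types'' of elliptic points occur for fixed $n$, and bound the resulting class-number ratio analytically using $d_{K'}/d_K=O(d_K)$.

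The packaging differs slightly. The paper indexes elliptic points by trace $s$, quotes Prestel's formula $l(s,1)=\dfrac{R'h'}{Rh}\cdot\Sigma$ with $K'=K(\sqrt{s^2-4})$, bounds the local sum $\Sigma$ crudely by $N(4-s^2)=O(1)$, and then applies Brauer--Siegel separately to $K$ and $K'$ to obtain $\dfrac{R'h'}{Rh}=O(d_K^{1/2+\epsilon})$. You instead index by order $m$, identify $L=K(\zeta_m)$ (the same field, since $s^2-4=(\zeta_m-\zeta_m^{-1})^2$), exploit the CM structure of $L/K$ to pin down the regulator ratio exactly as $R_L/R_K=2^{n-1}/Q$, and then bound $L(1,\chi_{L/K})$ by convexity. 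Your route has the mild technical advantage of needing only the easy upper bound for $L(1,\chi)$ and hence being effective, whereas the paper's invocation of Brauer--Siegel for the lower bound on $h_KR_K$ is ineffective; otherwise the two proofs are equivalent.
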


\begin{proof}
We follow the approach of (\cite{Tsu86}, p.664) to estimate the number of elliptic points of trace $s$ usually denoted $l(s,1)$.
Recall that $h, d_K, R$ denote the class number of $K$, the absolute value of the discriminant and the positive regulator. Let $K'=K(\sqrt{s^2-4}).$ Let $h',d_{K'}, R'$ denote the class number of $K'$, the absolute value of the discriminant and the positive regulator and let $\mathcal{D}_{K'/K}$ be the relative discriminant. Then $\mathcal{D}_{K'/K} | 4-s^2.$ Let $\mathcal{U}_0$ be the ideal in $O_K$ determined by $\mathcal{U}_0^2\mathcal{D}_{K'/K}=(4-s^2)O_K.$
By Prestel \cite{Prestel} (5.4) the number of elliptic points with trace $s$ is
$$l(s,1)=\frac{R'h'}{Rh}.\sum_{\mathcal{U}|\mathcal{U}_0} \frac{\phi_{O_K'}(\mathcal{U}_0\mathcal{U}^{-1}O_{K'})}{\phi_{O_K}(\mathcal{U}_0\mathcal{U}^{-1})},$$
where $\phi_{O_K}(\mathcal{U}_0\mathcal{U}^{-1})$ denotes the number of prime residue classes of $O_K$ mod $\mathcal{U}_0\mathcal{U}^{-1}.$
Now $$\phi_{O_K'}(A) \leq \phi_{O_K}(A)^2$$ for any ideal $A$ in $O_K$ since $K'$ is at most a quadratic extension of $K$.
Moreover, $$\phi_{O_K}(A) \leq N(A)$$ where $N(A)$ denotes the norm.

Therefore, a rough estimate gives
$$\sum_{\mathcal{U}|\mathcal{U}_0} \frac{\phi_{O_K'}(\mathcal{U}_0\mathcal{U}^{-1}O_{K'})}{\phi_{O_K}(\mathcal{U}_0\mathcal{U}^{-1})} \leq \sum_{\mathcal{U}|\mathcal{U}_0} N(\mathcal{U}) \leq N(\mathcal{U}_0)^2 \leq N(4-s^2). $$
which is constant for fixed $n$. By the Brauer-Siegel theorem (\cite{Lang} Corollary p.328) $\log (hR)\sim \log d_K^{1/2}.$ Since $d_{K'}=|N_{K/\Q}(\mathcal{D}_{K'/K})|d_K^2$ we obtain that for any $\epsilon>0,$ $$l(s,1)<d_K^{1/2+\epsilon}$$ for $d_K$ sufficiently large. 
To finish let us remark that, $n$ being fixed, there are only finitely many possibilities for $s$.
\end{proof}

\begin{corollary}
If $n\geq 2$ then with finite exceptions, Hilbert modular varieties of dimension $n$ satisfy the strong Green-Griffiths-Lang conjecture.
\end{corollary}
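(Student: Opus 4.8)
The plan is to obtain the statement by combining the two results that immediately precede it: the conditional strong Green-Griffiths-Lang theorem and the estimate on the number of elliptic fixed points. First I would invoke the earlier corollary which asserts that, as soon as the number of equivalence classes of elliptic fixed points grows no faster than $O(d_K^{\epsilon})$ for some $0<\epsilon<3/2$, all but finitely many Hilbert modular varieties of dimension $n$ satisfy the conjecture. The whole task then reduces to verifying that this growth hypothesis holds.

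For this I would feed in the preceding proposition, according to which, for fixed $n$, the number of equivalence classes of elliptic fixed points is $O(d_K^{1/2+\epsilon})$ for every $\epsilon>0$. Choosing $\epsilon<1$, the exponent $1/2+\epsilon$ is strictly smaller than $3/2$, so the hypothesis of the conditional corollary is met and the conclusion follows immediately.

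It is worth making explicit the comparison of growth rates in $d_K$ on which the conditional corollary rests, since that is where the real content lies. On one side, the Riemann-Roch estimate (\ref{RR}) gives, for fixed $n$, a lower bound of order $d_K^{3/2}k^n$ for the dimension of the space $S_{2l}^{\nu l}(\Gamma_K)$ of forms of weight $k=2l$ with prescribed vanishing at the cusps, the subtracted term being only of order $d_K\,k^n$ after the Brauer-Siegel relation $hR\sim d_K^{1/2}$. On the other side, by Proposition \ref{ell} each elliptic fixed point requires a vanishing to order proportional to $ln$, which is $O(k^n)$ linear conditions in the $n$ local coordinates; summed over the $O(d_K^{1/2+\epsilon})$ elliptic points this imposes $O(d_K^{1/2+\epsilon}k^n)$ conditions in total. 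The factors $k^n$ match, so the comparison comes down to $d_K^{1/2+\epsilon}$ against $d_K^{3/2}$, and since $1/2+\epsilon<3/2$ the space of forms is not exhausted once $d_K$ is large enough. One thus produces a single Hilbert modular form meeting the hypotheses of both Proposition \ref{cusp} and Proposition \ref{ell}, and Corollary \ref{SGG} delivers the strong Green-Griffiths-Lang property.

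The final combination is immediate; the genuine obstacles sit in the two inputs. The more delicate one is keeping the number of elliptic fixed points strictly below the threshold $d_K^{3/2}$, which is exactly what the analysis of Prestel's formula together with the Brauer-Siegel theorem achieves, yielding the exponent $1/2+\epsilon$ with room to spare. This is also the point at which the restriction $n\geq 2$ is used: it is in dimension at least two that the quotient is genuinely higher-dimensional and of general type, so that the metric construction of the preceding subsections and Corollary \ref{SGG} apply.
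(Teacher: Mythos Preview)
Your proposal is correct and follows exactly the paper's (implicit) approach: the final corollary is immediate once one feeds the elliptic-point bound $O(d_K^{1/2+\epsilon})$ into the conditional corollary requiring growth $O(d_K^{\epsilon})$ with $\epsilon<3/2$. Your extra paragraph unpacking the dimension count behind the conditional corollary is accurate and helpful, though strictly speaking it re-proves an earlier result rather than the statement at hand; the only soft spot is the closing remark on where $n\geq 2$ enters, which is more about the statement being meaningful (infinitely many fields, genuinely higher-dimensional quotients) than about any step of the metric construction failing for $n=1$.
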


\bibliography{bibliography}{}

\end{document}